%%%%%%%%%%%%%%%%%%%%%%%%%%%%%%%%%%%%
\documentclass[fleqn]{zzz}
\usepackage{mathrsfs}
\usepackage{color}
\usepackage{hyperref}

\topmargin -1.6cm

\hypersetup{
   colorlinks = true,
   urlcolor = blue,
   linkcolor = red
}

%%%%%  This lines are about to insert comments
%\input comment.sty
%\includecomment{com}      %% include the comments
%\excludecomment{com}  %% Exclude the comments
%%%%  To insert comment use \begin{com} .... \end{com}
%%%%%  End of the code about to insert comments

\newcommand{\N}{{\mathbb N}}
\newcommand{\R}{{\mathbb R}}
\newcommand{\C}{{\mathbb C}}

\newtheorem{thm}[lemma]{Theorem}
\newtheorem{cor}[lemma]{Corollary}
\newtheorem{prop}[lemma]{Proposition}

\makeatletter
\def\eqnarray{\stepcounter{equation}\let\@currentlabel=\theequation
\global\@eqnswtrue
\tabskip\@centering\let\\=\@eqncr
$$\halign to \displaywidth\bgroup\hfil\global\@eqcnt\z@
  $\displaystyle\tabskip\z@{##}$&\global\@eqcnt\@ne
  \hfil$\displaystyle{{}##{}}$\hfil
  &\global\@eqcnt\tw@ $\displaystyle{##}$\hfil
  \tabskip\@centering&\llap{##}\tabskip\z@\cr}

\def\endeqnarray{\@@eqncr\egroup
      \global\advance\c@equation\m@ne$$\global\@ignoretrue}

\def\@yeqncr{\@ifnextchar [{\@xeqncr}{\@xeqncr[5pt]}}
\makeatother

\parskip=0pt

\begin{document}

\renewcommand{\PaperNumber}{***}

\FirstPageHeading

\ShortArticleName{Sums involving the number of distinct prime factors function}
\ArticleName{On sums involving the number of distinct prime factors function}

% Names of the authors for the title of the paper
\Author{Tanay V.~Wakhare$^{\ast\dagger}$}

\AuthorNameForHeading{T.~Wakhare}

\Address{$^\ast$~University of Maryland, College Park MD 20742}
\Address{$^\dagger$~Center for Nanoscale Science and Technology, National Institute of Standards and Technology, Gaithersburg MD 20899}
\EmailD{twakhare@gmail.com}

\ArticleDates{Received XX January 2017 in final form ????; Published online ????}

\Abstract{The main object of this paper is to find closed form expressions for finite and infinite sums that are weighted by $\omega(n)$, where $\omega(n)$ is the number of distinct prime factors of $n$. We then derive general convergence criteria for these series. The approach of this paper is to use the theory of symmetric functions to derive identities for the elementary symmetric functions, then apply these identities to arbitrary primes and values of multiplicative functions evaluated at primes. This allows us to reinterpret sums over arbitrary complex numbers as divisor sums and sums over the natural numbers.
}

\Keywords{Symmetric Polynomials; Analytic Number Theory; Distinct Prime Factors; Dirichlet Series; Divisor Sums; Multiplicative Number Theory}
%Please type here List of Keywords for your article separated by semicolon.
% Keywords required only for MST, PB, PMB, PM, JOA, JOB?
% Keywords:

\Classification{11C08;11M06;11N99;11Z05}
%{??????} % e.g. 35A30; 81Q05
%For 2010 Mathematics Subject Classification see
%http://www.ams.org/mathscinet/msc/msc2010.html

\section{Introduction}
\label{Introduction}
The number of distinct prime factors of a natural number $n$, denoted by $\omega(n)$, has been the subject of extensive study, from Hardy and Ramanujan \cite{HardyRamanujan} to Erd{\H{o}}s and Kac \cite{ErdosKac}. Letting $n=\prod_i p_i^{\alpha_i}$, we have $\omega(n)=i$. For example, $\omega(5)=1$, $\omega(10)=2$, and $\omega(100)=\omega(2^25^2)=2$. Throughout, we assume that $\omega(1)=0$. The function $\omega(n)$ is also additive, so $\omega(nm)=\omega(n)+\omega(m)$ for any coprime $n$ and $m$. 

In this work, we address two major types of identities: divisor sums and Dirichlet series. Many of the closed form expressions available here do not seem to be present anywhere. This is because the easiest functions to work with are \textit{multiplicative}, while $\omega(n)$ is \textit{additive}. Identities mixing both types of functions are not easy to derive using standard methods. In this paper, we use the theory of symmetric functions to find an alternative proof for many series expansions. The methods readily generalize, and can be applied to other additive functions. Our two main results are that
\begin{equation}
\sum_{d|n}\mu(d)\omega(d) f(d) = \left(\sum_{d|n}\mu(d) f(d)\right)\left( \sum_{p|n}\frac{f(p)}{f(p)-1}\right)
\end{equation}
and
\begin{equation}\label{inf1}
\sum_{n\in\N}\frac{\omega(n)f(n)}{n^s} = \left(\sum_{n\in\N}\frac{f(n)}{n^s}\right)\left(\sum_{p}\frac{a_p}{1+a_p}\right),
\end{equation}
with suitable restrictions on $f$ and $s$, and $a_p:=\sum_{m=1}^{\infty}\frac{f({p}^m)}{{p}^{ms}}$.

For convenience, we note here that a multiplicative function satisfies $f(nm)=f(n)f(m)$ for $(n,m)=1$, where $(n,m)$ gives the greatest common divisor of $n$ and $m$. A completely multiplicative function satisfies $f(nm)=f(n)f(m)$ for any $n$ and $m$. Both satisfy $f(1)=1$. An additive function satisfies $f(nm)=f(n)+f(m)$ for $(n,m)=1$, while a completely additive function satisfies $f(nm)=f(n)+f(m)$ for any $n$ and $m$.

Now, we address some notation. We let $\sum_{p}a_p$ and $\prod_{p}a_p$ denote sums and products over all primes $p$, beginning with $p=2$. We let $\sum_{p|n}a_p$ and $\prod_{p|n}a_p$ denote sums and products over the distinct primes that divide a positive integer $n$. Finally, we let $\sum_{d|n}a_d$ and $\prod_{d|n}a_d$ denote sums and products over the positive divisors of a positive integer $n$, including $1$ and $n$. For example if $n=12$, $\prod_{d|12}a_d=a_1a_2a_3a_4a_6a_{12}$ and $\prod_{p|12}a_p= a_2a_3$. For the duration of this paper, the symbol $\N:=\left\lbrace 1,2,3\ldots \right\rbrace$ will denote the set of positive integers and will be referred to as the natural numbers. We obey the convention that for $s \in \C$, $s= \sigma + i t$ with $\sigma, t \in \R$. We also define $\C_{\alpha}:=\{z\in\C: z\ne \alpha\}$. 

We also utilize the theory of Euler products. An Euler product, described in \cite[(27.4.1-2)]{NIST:DLMF}, is the product form of a Dirichlet series. For any multiplicative function $f$ we have \cite[(11.8)]{Apostol} $\sum_{n\in\N}\frac{f(n)}{n^s}=\prod_{p}\left(1+a_p\right)$, where
\begin{equation}\label{infap}
a_p:=\sum_{m=1}^{\infty}\frac{f({p}^m)}{{p}^{ms}}.
\end{equation}
The left-hand side is an Dirichlet series, and the right-hand is an Euler product. Furthermore, letting $s=\sigma+it$, the abscissa of absolute convergence is the unique real number such that $\sum_{n\in\N}\frac{f(n)}{n^s}$ absolutely converges if and only if $\sigma>\sigma_a$. 

Some multiplicative functions which will be used in this paper include $\phi(n)$, which denotes the Euler totient function. This counts the number of natural numbers less than or equal to $n$ which are coprime to $n$\cite[(27.2.7)]{NIST:DLMF}. A multiplicative expression for it \cite[(27.3.3)]{NIST:DLMF} is
\begin{equation}\label{def1}
\phi(n)=n\prod_{p|n}\left(1-\frac{1}{p}\right).
\end{equation}
This is generalized by Jordan's totient function, which is defined in \cite[(27.3.4)]{NIST:DLMF} as
\begin{equation}\label{def2}
J_k(n):=n^k\prod_{p|n}\left(1-\frac{1}{p^k}\right).
\end{equation}

Using $\omega(n)$, we can also define the M{\"o}bius function, which gives the parity of the number of prime factors in a squarefree number. It is defined in \cite[(27.2.12)]{NIST:DLMF} as 
\begin{equation}\label{def4}
\mu(n) :=\begin{cases} 1, & \text{if $n$=1,} \\ 0, & \text{if $n$ is non-squarefree,} \\  (-1)^{\omega(n)}, & \text{if $n$ is squarefree.} \\\end{cases}
\end{equation}

The paper is organized as follows.
In Section \ref{polynomials}, we derive identities for the elemtary symmetric functions.
In Section \ref{divisorsums}, divisor sums of $\omega(n)$ weighted by other functions are explored.
In Section \ref{infinitesums}, Dirichlet series of $\omega(n)$ weighted by other functions are explored.
Finally, in Section \ref{extensions} we present an extension to higher orders, and lay out other possible extensions of this work.

%%%%%%%%%%%%%%%%%%%%%%%%%%%%%%%%%%%%%%%%

\section{Factorization identities}
\label{polynomials}
The main results of this paper are based on the following proposition.
%%%%%%%%%%%%%%%%%%%%%%%%%%%%%%%%%%%%%%%%%%%%%%%%%%%%%%%%%%%%%%%%%%%%%%%%%%%%%%%%%%%%
\begin{prop} Let $n\in\N$ and $x_1,\ldots,x_n \in \C_1$. Then
\begin{equation}\label{main}
\left(\prod_{i = 1}^{n}(1-x_i)\right)\left( \sum_{i = 1}^{n}\frac{x_i}{x_i-1}\right)= \sum_{k = 1}^{n}(-1)^k k e _{k}, 
\end{equation}
where 
\begin{equation}
e_k:=\sum_{1\leqslant i_1< i_2< \cdots < i_k \leqslant n }^{}x_{i_1}x_{i_2}\cdots x_{i_k}.
\end{equation}
\end{prop}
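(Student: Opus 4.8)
The plan is to prove the identity by recognizing the left-hand side as a logarithmic derivative in disguise, and then expanding everything in terms of the elementary symmetric functions $e_k$. The key observation is that the factor $\sum_{i=1}^n \frac{x_i}{x_i-1}$ can be rewritten as $-\sum_{i=1}^n \frac{x_i}{1-x_i}$, and that $\frac{x_i}{1-x_i}$ is essentially the logarithmic derivative contribution of the factor $(1-x_i)$. More precisely, if we set $P(t) := \prod_{i=1}^n(1-t x_i)$, then $\frac{P'(t)}{P(t)} = \sum_{i=1}^n \frac{-x_i}{1-t x_i}$, so evaluating at $t=1$ recovers the bracketed sum up to sign. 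This suggests differentiating the generating identity $\prod_{i=1}^n(1-t x_i) = \sum_{k=0}^n (-1)^k e_k t^k$ with respect to $t$ and then setting $t=1$.

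First I would write down the generating function identity $\prod_{i=1}^n(1-t x_i) = \sum_{k=0}^n (-1)^k e_k t^k$, which is just the standard definition of the elementary symmetric polynomials with a sign bookkeeping. Next I would differentiate both sides with respect to $t$: the right-hand side becomes $\sum_{k=1}^n (-1)^k k\, e_k t^{k-1}$, and the left-hand side, by the product/logarithmic-derivative rule, becomes $\left(\prod_{i=1}^n(1-t x_i)\right)\left(\sum_{i=1}^n \frac{-x_i}{1-t x_i}\right)$. Setting $t=1$ on both sides then yields $\left(\prod_{i=1}^n(1-x_i)\right)\left(\sum_{i=1}^n \frac{-x_i}{1-x_i}\right) = \sum_{k=1}^n (-1)^k k\, e_k$. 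Finally I would note that $\frac{-x_i}{1-x_i} = \frac{x_i}{x_i-1}$, which matches the stated left-hand side exactly, completing the proof.

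The only delicate point, and where I expect the mild obstacle to lie, is the evaluation at $t=1$ together with the hypothesis $x_i \in \C_1$ (i.e.\ $x_i \neq 1$). The differentiation is a formal polynomial identity in $t$, so it holds as an identity of polynomials and requires no convergence or analyticity assumptions; the subtlety is only that the factored form $\left(\prod_i(1-t x_i)\right)\sum_i \frac{-x_i}{1-t x_i}$ has apparent poles at $t = 1/x_i$. Since we evaluate at $t=1$, we must ensure no denominator $1 - x_i$ vanishes, which is precisely guaranteed by $x_i \neq 1$. Thus the restriction $x_1,\dots,x_n \in \C_1$ is exactly what makes the right-hand factored expression well defined at $t=1$; the polynomial identity itself is unconditional, and the hypothesis serves only to license writing the left-hand side in the given ratio form. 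An alternative, more pedestrian route would be to expand both sides directly and match the coefficient of each monomial, but the logarithmic-derivative approach is cleaner and makes the combinatorial structure transparent.
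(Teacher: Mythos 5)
Your proof is correct and takes essentially the same route as the paper: differentiate the generating identity $\prod_{i=1}^n(1-tx_i)=\sum_{k=0}^n(-1)^k e_k t^k$ (a logarithmic derivative in the paper's phrasing) and evaluate at $t=1$. Your discussion of why the hypothesis $x_i\neq 1$ is needed---only to make the factored form well defined at $t=1$, the polynomial identity being unconditional---is in fact more careful than the paper's own one-line treatment.
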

\begin{proof}
Following Macdonald \cite[(Chapter 1)]{Macdonald}, we have the relation
\begin{equation}\label{VietaDef}
E(t)=\prod_{i = 1}^{n}(1- t x_i)=\sum_{k = 0}^{n}(-1)^{k}t^k e_k,
\end{equation}
with $x_i\in\C$. Taking a logarithmic derivative of $E(t)$ yields
\begin{equation}
E'(t)=E(t)\sum_{k=1}^n \frac{x_i}{t x_i-1}.
\end{equation}
Taking $t=1$ completes the proof.
\end{proof}

\begin{cor} Let $n\in\N$ and $x_1,\ldots,x_n \in \C_{-1}$. Then
\begin{equation}\label{mainpos}
\left(\prod_{i = 1}^{n}(1+x_i)\right)\left(\sum_{i = 1}^{n}\frac{x_i}{x_i+1}\right)=\sum_{k = 1}^{n}k e _{k}.
\end{equation}
\end{cor}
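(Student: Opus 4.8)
The plan is to obtain the corollary directly from the proposition by applying it to the variables $-x_1,\ldots,-x_n$ in place of $x_1,\ldots,x_n$, since the two identities are formally related by a single sign change. First I would check that the hypothesis transforms correctly: to invoke the proposition with arguments $-x_i$ I need each $-x_i \in \C_1$, i.e.\ $-x_i \ne 1$, which is exactly the condition $x_i \ne -1$, i.e.\ $x_i \in \C_{-1}$. Thus the domain $\C_{-1}$ claimed in the corollary is precisely what is needed.

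Next I would track each factor of \eqref{main} under the substitution. The product becomes $\prod_{i=1}^{n}\bigl(1-(-x_i)\bigr)=\prod_{i=1}^{n}(1+x_i)$, while each summand of the middle factor becomes $\tfrac{-x_i}{(-x_i)-1}=\tfrac{x_i}{x_i+1}$, so the left-hand side of the proposition turns into exactly the left-hand side of \eqref{mainpos}. The only point deserving explicit mention is the behaviour of the elementary symmetric polynomials: since $e_k$ is homogeneous of degree $k$ in the $x_i$, replacing every variable by its negative multiplies it by $(-1)^k$, that is, $e_k(-x_1,\ldots,-x_n)=(-1)^k e_k(x_1,\ldots,x_n)$. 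Feeding this into the right-hand side $\sum_{k=1}^{n}(-1)^k k\,e_k$ of the proposition yields $\sum_{k=1}^{n}(-1)^k k\,(-1)^k e_k=\sum_{k=1}^{n}k\,e_k$, because $(-1)^{2k}=1$, which is the right-hand side of \eqref{mainpos}.

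I do not anticipate any genuine obstacle, as the corollary is simply the reflection of the proposition under $x_i\mapsto -x_i$; the one subtlety to record carefully is the homogeneity identity for $e_k$, which is precisely what converts the alternating coefficients $(-1)^k$ of the proposition into the plain coefficients of the corollary. As an alternative that avoids the substitution bookkeeping, one could instead rerun the logarithmic-derivative argument verbatim starting from the generating function $\prod_{i=1}^{n}(1+t x_i)=\sum_{k=0}^{n}t^k e_k$ and evaluate at $t=1$; but the substitution route is shorter and reuses \eqref{main} as a black box.
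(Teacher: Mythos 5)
Your proof is correct and is in essence the paper's own argument: the paper proves the corollary by mapping $t \mapsto -t$ in the generating function $E(t)=\prod_{i=1}^{n}(1-tx_i)$ before taking the logarithmic derivative, which is exactly your substitution $x_i \mapsto -x_i$ carried out inside the proof rather than in the finished identity \eqref{main}; your closing alternative (rerunning the logarithmic-derivative argument from $\prod_{i=1}^{n}(1+tx_i)=\sum_{k=0}^{n}t^k e_k$) is precisely what the paper does. The only cosmetic difference is that your route must invoke the homogeneity $e_k(-x_1,\ldots,-x_n)=(-1)^k e_k(x_1,\ldots,x_n)$ explicitly, whereas the paper absorbs that same sign cancellation into the expansion of the generating function.
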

\begin{proof}
We repeat the previous argument with but map $t$ to $-t$ before taking the logarithmic derivative.
\end{proof}
%%%%%%%%%%%%%%%%%%%%%%%%%%%%%%%%%%%%%%%%%%%%%%%%%%%%%%%%%%%%%%%%%%%%%%%%%%%%%%%%%%%%

\section{Divisor sums}\label{divisorsums}
%%%%%%%%%%%%%%%%%%%%%%%%%%%%%%%%%%%%%%%%%%%%%%%%%%%%%%%%%%%%%%%%%%%%%%%%%%%%%%%%%%%%
The following theorems are obtained by reinterpreting \eqref{main}. We consider divisor sums of a multiplicative function $f$ weighted by $\omega(n)$ and $\mu(n)$ or $|\mu(n)|$. Throughout, $f(p)$ will refer to the value of $f$ evaluated at any prime $p$. We define $\sum_{p|n} g(p)$ as $0$ if $n=1$, where $g$ is any function, not necessarily multiplicative, because $1$ has no distinct prime factors. Under this convention, the theorems in this section also hold for $n=1$.

\begin{thm} Let $f(n)$ be a multiplicative function, $n \in \N$, and $f(p)\neq 1$ for any prime $p$ that divides $n$. Then
\begin{equation}\label{finite1}
\sum_{d|n}\mu(d)\omega(d) f(d) = \left(\prod_{p|n}(1-f(p))\right) \left(\sum_{p|n}\frac{f(p)}{f(p)-1}\right).
\end{equation}
\end{thm}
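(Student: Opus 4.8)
The plan is to specialize Proposition~\eqref{main} by choosing the variables $x_i$ to be precisely the values $f(p)$ as $p$ ranges over the distinct primes dividing $n$. Concretely, if $n$ has distinct prime factors $p_1,\ldots,p_r$ (so $r=\omega(n)$), I would set $x_i:=f(p_i)$ for $i=1,\ldots,r$ and take the integer $n$ in the proposition to be $r$. The hypothesis $f(p)\neq 1$ for every prime $p\mid n$ guarantees $x_i\in\C_1$, so the proposition applies. With this substitution the left-hand factor $\prod_{i=1}^{r}(1-x_i)$ becomes $\prod_{p|n}(1-f(p))$ and the sum $\sum_{i=1}^{r}\frac{x_i}{x_i-1}$ becomes $\sum_{p|n}\frac{f(p)}{f(p)-1}$, so the left side of \eqref{main} already matches the right side of the claimed identity \eqref{finite1}. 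The work therefore reduces to showing that the right side of \eqref{main}, namely $\sum_{k=1}^{r}(-1)^k k\,e_k$, equals the divisor sum $\sum_{d|n}\mu(d)\omega(d)f(d)$.

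To establish that matching, I would expand the elementary symmetric function $e_k=\sum_{i_1<\cdots<i_k}f(p_{i_1})\cdots f(p_{i_k})$ and interpret each monomial number-theoretically. A product $f(p_{i_1})\cdots f(p_{i_k})$ of $f$ evaluated at $k$ distinct primes equals $f(p_{i_1}\cdots p_{i_k})$ by multiplicativity, and $d:=p_{i_1}\cdots p_{i_k}$ is exactly a squarefree divisor of $n$ with $\omega(d)=k$. Thus $e_k=\sum_{\substack{d|n,\ d\ \mathrm{squarefree}\\ \omega(d)=k}}f(d)$. For such squarefree $d$ we have $\mu(d)=(-1)^{\omega(d)}=(-1)^k$ and $\omega(d)=k$, so $(-1)^k k\,e_k=\sum_{\substack{d|n,\ \mathrm{sqfree}\\ \omega(d)=k}}\mu(d)\omega(d)f(d)$. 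Summing over $k$ from $1$ to $r$ then collects every squarefree divisor $d>1$ of $n$, while non-squarefree divisors contribute nothing since $\mu=0$ there, and the divisor $d=1$ contributes nothing since $\omega(1)=0$. This reassembles exactly $\sum_{d|n}\mu(d)\omega(d)f(d)$, completing the identification.

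The main obstacle, though a mild one, is bookkeeping the correspondence between index subsets $\{i_1,\ldots,i_k\}$ and squarefree divisors $d$: I must verify it is a bijection onto squarefree divisors with $\omega(d)=k$, which follows from unique factorization and the fact that the $p_i$ are distinct. I should also be careful that multiplicativity, not merely complete multiplicativity, suffices here—it does, precisely because the primes in each product are pairwise coprime. Finally, I would remark that the stated edge case $n=1$ is handled by the paper's convention that $\sum_{p|1}g(p)=0$, under which both sides of \eqref{finite1} vanish (the empty product on the right is $1$ and the empty sum is $0$), so no separate argument is needed.
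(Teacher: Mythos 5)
Your proof is correct and follows essentially the same route as the paper: substitute $x_i=f(p_i)$ into Proposition~\eqref{main} and identify $e_k$ with the sum of $f$ over squarefree divisors having exactly $k$ prime factors. The only difference is organizational—the paper first proves the identity for squarefree $n$ and then extends it by observing that $\mu$ kills non-squarefree divisors, whereas you handle general $n$ in one pass by building that observation into the identification of $e_k$; this is a mild streamlining, not a different argument.
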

\begin{proof}
First take any squarefree natural number $n$, so that $n = \prod_{i=1}^{\omega(n)}{p_i}$ by the fundamental theorem of arithmetic. We then let $x_i=f(p_i)$ in \eqref{main}, such that each $x_i$ is an arithmetic function evaluated at each distinct prime that divides $n$. We can evaluate $e_k$, yielding
$$e_k=\sum_{1\leqslant i_1< i_2 \cdots < i_k \leqslant n }f(p_{i_1})f(p_{i_2})\cdots f(p_{i_k}).$$
Since $f$ is multiplicative and each $p_i$ is coprime to the others by definition, we have 
$$e_k=\sum_{1\leqslant i_1< i_2 \cdots < i_k \leqslant n }f(p_{i_1}p_{i_2}\cdots p_{i_k}).$$
Now we can regard $e_k$ as the sum of $f(n)$ evaluated at the divisors of $n$ with $k$ prime factors. This is because each term in $e_k$ trivially has $k$ prime factors, and every possible product of $k$ primes that divide $n$ is included in $e_k$. This is equivalent to partitioning the divisors of $n$ based on their number of distinct prime factors. Then \eqref{main} transforms into 
$$\left(\prod_{i = 1}^{\omega(n)}(1-f(p_i))\right)\left( \sum_{i = 1}^{\omega(n)}\frac{f(p_i)}{f(p_i)-1}\right)=\sum_{d|n}\omega(d)(-1)^{\omega(d)}f(d).$$
Each divisor $d$ is squarefree since $n$ is squarefree, so we can replace $(-1)^{\omega(d)}$ by $\mu(d)$, where $\mu(d)$ is the M{\"o}bius function which is defined by $\eqref{def4}$. Rewriting the product and sum over $p_i$, $1\leq i \leq \omega(n)$, as a product and sum over $p$ gives \eqref{finite1} for squarefree numbers. However, we can immediately see that if $n$ is non-squarefree, $\mu(d)$ eliminates any non-squarefree divisors on the left-hand side. Meanwhile, the right-hand side is evaluated over the distinct primes that divide $n$ so changing the multiplicities of these primes will not affect the sum in any way. Therefore  \eqref{finite1} is valid for all $n\in\N$, which completes the proof.
\end{proof}

\begin{cor} Let $f(n)$ be a multiplicative function, $n \in \N$, and $f(p)\neq 1$ for any prime $p$ that divides $n$. Then
\begin{equation}\label{finitecor1}
\sum_{d|n}\mu(d)\omega(d) f(d) = \left(\sum_{d|n}\mu(d) f(d)\right)\left( \sum_{p|n}\frac{f(p)}{f(p)-1}\right).
\end{equation}
\end{cor}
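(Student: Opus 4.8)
The plan is to derive the Corollary from the Theorem by recognizing that the only difference between equations \eqref{finite1} and \eqref{finitecor1} is that the product $\prod_{p|n}(1-f(p))$ has been replaced by the divisor sum $\sum_{d|n}\mu(d)f(d)$. So the entire task reduces to proving the identity
\begin{equation}\label{plan-key}
\sum_{d|n}\mu(d)f(d)=\prod_{p|n}(1-f(p)),
\end{equation}
after which the Corollary follows immediately by substitution into \eqref{finite1}.

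To establish \eqref{plan-key}, I would first observe that the summand $g(d):=\mu(d)f(d)$ is multiplicative, being the product of the two multiplicative functions $\mu$ and $f$. A standard result in multiplicative number theory (the one underlying the Euler product expansion quoted earlier in the excerpt) states that for any multiplicative $g$, the divisor sum $\sum_{d|n}g(d)$ is itself multiplicative and factors as $\prod_{p|n}\bigl(\sum_{m\geqslant 0}g(p^m)\bigr)$, where the inner sum runs over the powers of $p$ dividing $n$. The key simplification is that $\mu(p^m)=0$ for all $m\geqslant 2$, so every local factor collapses to $g(1)+g(p)=1+\mu(p)f(p)=1-f(p)$. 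Taking the product over the distinct primes dividing $n$ then yields exactly $\prod_{p|n}(1-f(p))$, which is \eqref{plan-key}.

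An alternative, more self-contained route that mirrors the proof of the Theorem is to avoid invoking general multiplicativity of divisor sums and instead restrict attention to the squarefree part of $n$ directly. Since $\mu(d)=0$ whenever $d$ is non-squarefree, only squarefree divisors contribute to $\sum_{d|n}\mu(d)f(d)$; writing $n=\prod_{i=1}^{\omega(n)}p_i$ (up to multiplicity, which is irrelevant here), each squarefree divisor corresponds to a choice of a subset of $\{p_1,\ldots,p_{\omega(n)}\}$, and expanding the product $\prod_{p|n}(1-f(p))$ produces exactly one term $(-1)^{|S|}\prod_{p\in S}f(p)=\mu(d)f(d)$ for each such subset $S$. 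This subset-versus-divisor bijection, combined with the multiplicativity of $f$ across coprime primes, gives \eqref{plan-key} termwise.

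The main obstacle is essentially bookkeeping rather than conceptual: one must be careful that \eqref{plan-key} is stated and used for \emph{all} $n\in\N$, including non-squarefree $n$, and verify that both sides are insensitive to the multiplicities of the primes in $n$. This is the same subtlety handled at the end of the Theorem's proof, so I expect to dispatch it with the identical remark, namely that $\mu$ annihilates non-squarefree divisors on the left and the right-hand product ranges only over distinct primes. Given how directly \eqref{plan-key} slots into \eqref{finite1}, the proof should be short, and the delicate point is simply ensuring the hypothesis $f(p)\neq 1$ is carried over so that the second factor $\sum_{p|n}f(p)/(f(p)-1)$ remains well defined.
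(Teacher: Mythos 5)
Your proposal is correct and follows essentially the same route as the paper: the paper's proof consists precisely of substituting the identity $\sum_{d|n}\mu(d) f(d)= \prod_{p|n}\left(1-f(p)\right)$ (cited as a known theorem from Apostol, equation (2.18)) into \eqref{finite1}. The only difference is that you prove this identity (correctly, by either of your two arguments) where the paper simply cites it, so your version is a bit more self-contained but not a different approach.
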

\begin{proof}
A general theorem for any multiplicative function $f(n)$, found in \cite[(2.18)]{Apostol}, is that
\begin{equation}\label{multApostol}
\sum_{d|n}\mu(d) f(d)= \prod_{p|n}\left(1-f(p)\right).
\end{equation}
Substituting this relation into \eqref{finite1} completes the proof.
\end{proof}

\begin{thm} Let $f(n)$ be a multiplicative function, $n \in \N$, and $f(p)\neq -1$ for any prime $p$ that divides $n$. Then
\begin{equation}\label{finite2}
\sum_{d|n}|\mu(d)|\omega(d) f(d) = \left(\prod_{p|n}(1+f(p))\right)\left( \sum_{p|n}\frac{f(p)}{1+f(p)}\right).
\end{equation}
\end{thm}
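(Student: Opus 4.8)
The plan is to mirror the proof of \eqref{finite1}, replacing the Proposition's identity \eqref{main} by the Corollary's identity \eqref{mainpos}. The point is that the sign pattern of \eqref{mainpos}, namely $\sum_{k=1}^{n} k\, e_k$ with no factor $(-1)^k$, is exactly what will produce $|\mu(d)|$ in place of $\mu(d)$. First I would take a squarefree $n \in \N$ and factor $n = \prod_{i=1}^{\omega(n)} p_i$ by the fundamental theorem of arithmetic, then set $x_i = f(p_i)$ in \eqref{mainpos}. The hypothesis $f(p) \neq -1$ for every prime $p \mid n$ guarantees that each $x_i \in \C_{-1}$, so the Corollary applies.

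Next, exactly as in the proof of \eqref{finite1}, I would use multiplicativity of $f$ together with the pairwise coprimality of the $p_i$ to rewrite
$$e_k = \sum_{1 \leqslant i_1 < i_2 < \cdots < i_k \leqslant \omega(n)} f(p_{i_1} p_{i_2} \cdots p_{i_k}),$$
which is the sum of $f(d)$ over the divisors $d \mid n$ having exactly $k$ distinct prime factors. Summing against $k$ then gives $\sum_{k=1}^{\omega(n)} k\, e_k = \sum_{d\mid n} \omega(d) f(d)$. The key simplification is that, since $n$ is squarefree, every divisor $d$ is squarefree and hence $|\mu(d)| = 1$; I may therefore insert this factor at no cost, turning the right-hand side of \eqref{mainpos} into $\sum_{d\mid n} |\mu(d)| \omega(d) f(d)$. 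Rewriting the product and the sum over the $p_i$, $1 \leqslant i \leqslant \omega(n)$, as a product and sum over the distinct primes $p \mid n$ then yields \eqref{finite2} for squarefree $n$.

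Finally, I would extend the identity to arbitrary $n \in \N$ by the same argument used for \eqref{finite1}. For non-squarefree $n$, the factor $|\mu(d)|$ on the left annihilates every non-squarefree divisor, so the left side collects contributions only from the squarefree divisors of $n$, which are determined entirely by the distinct primes dividing $n$. The right side is built from products and sums over $p \mid n$ and is therefore insensitive to the multiplicities of these primes. Both sides consequently reduce to the squarefree case already established. I expect the only delicate point to be this passage from squarefree to general $n$, but it is handled precisely as before: $|\mu|$ truncates the left-hand side to squarefree divisors, while the right-hand side only ever sees distinct primes.
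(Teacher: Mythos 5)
Your proof is correct and follows essentially the same route as the paper's: substituting $x_i = f(p_i)$ into \eqref{mainpos} rather than \eqref{main}, establishing the identity for squarefree $n$, and then extending to general $n$ because $|\mu(d)|$ annihilates non-squarefree divisors while the right-hand side depends only on the distinct primes dividing $n$. In fact your write-up spells out in full the details that the paper's terse proof merely references back to the proof of \eqref{finite1}.
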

\begin{proof}
We follow the reasoning of the proof of \eqref{finite1} but substitute $x_i=f(p_i)$ into \eqref{mainpos} instead of \eqref{main}. Hence, we only have to  take the divisor sum over squarefree divisors without multiplying by $(-1)^{\omega(d)}$. We do this by multiplying the divisor sums by $|\mu(d)|$, the characteristic function of the squarefree numbers. This completes the proof.
\end{proof}

\begin{cor} Let $f(n)$ be a multiplicative function with $f(1)=1$, $f(p)\neq -1$, and $n\in\N$. Then
\begin{equation}
\sum_{d|n}|\mu(d)|\omega(d) f(d) =\left( \sum_{d|n}|\mu(d)|f(d)\right)\left( \sum_{p|n}\frac{f(p)}{1+f(p)}\right).
\end{equation}
\end{cor}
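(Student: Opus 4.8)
The plan is to mirror exactly the passage from Theorem \eqref{finite2} to its corollary in the same way that Corollary \eqref{finitecor1} was obtained from Theorem \eqref{finite1}. That is, I would take the already-proven Theorem \eqref{finite2}, which expresses the weighted sum in terms of the product $\prod_{p|n}(1+f(p))$, and simply rewrite that product as a divisor sum weighted by $|\mu(d)|$. The entire task therefore reduces to establishing the companion identity
\begin{equation}
\sum_{d|n}|\mu(d)|f(d)=\prod_{p|n}(1+f(p)),
\end{equation}
which plays the same role here that \eqref{multApostol} played in the previous corollary.

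To prove this identity I would argue as follows. Since $|\mu(d)|$ is the characteristic function of the squarefree integers, it is multiplicative, and hence $g(d):=|\mu(d)|f(d)$ is multiplicative as a product of multiplicative functions. Writing $n=\prod_{i}p_i^{\alpha_i}$, the divisor sum of any multiplicative $g$ factors as $\sum_{d|n}g(d)=\prod_{i}\bigl(1+g(p_i)+g(p_i^2)+\cdots+g(p_i^{\alpha_i})\bigr)$. But $|\mu(p^k)|=0$ for every $k\geq 2$, so each local factor collapses to $1+g(p_i)=1+f(p_i)$, using $|\mu(p_i)|=1$ together with $f(1)=1$. This yields the claimed identity. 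Equivalently, one may observe that a squarefree divisor of $n$ is precisely a choice of a subset of the distinct primes dividing $n$, so the sum over such divisors factors directly by multiplicativity into $\prod_{p|n}(1+f(p))$.

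With this in hand, I would substitute $\prod_{p|n}(1+f(p))=\sum_{d|n}|\mu(d)|f(d)$ into the right-hand side of \eqref{finite2}, which immediately produces the asserted factorization. I do not anticipate any genuine obstacle here, since the argument is purely a substitution once the squarefree-sum identity is in place. The only point requiring a moment's care is that the hypotheses line up, namely that $f$ is multiplicative with $f(1)=1$ and $f(p)\neq -1$ for the relevant primes, which are exactly the conditions under which both \eqref{finite2} and the companion identity hold; and, as before, the degenerate case $n=1$ is covered by the convention that $\sum_{p|n}g(p)=0$ there.
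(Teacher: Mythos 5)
Your proposal is correct and follows essentially the same route as the paper: both reduce the corollary to the companion identity $\sum_{d|n}|\mu(d)|f(d)=\prod_{p|n}(1+f(p))$ and then substitute it into \eqref{finite2}. The only difference is how that identity is justified: the paper obtains it by substituting $f(n)=\mu(n)g(n)$ into Apostol's formula \eqref{multApostol} and using $\mu^2=|\mu|$, whereas you prove it directly from the standard factorization $\sum_{d|n}g(d)=\prod_{i}\bigl(1+g(p_i)+\cdots+g(p_i^{\alpha_i})\bigr)$ for multiplicative $g$, with the local factors collapsing because $|\mu(p^k)|=0$ for $k\geq 2$. Your derivation is self-contained and arguably more transparent (it makes visible why only squarefree divisors, i.e.\ subsets of the prime divisors, contribute), while the paper's is shorter by leaning on the already-cited \eqref{multApostol}; both are equally valid under the stated hypotheses.
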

\begin{proof}
We substitute $f(n)=\mu(n)g(n)$ into \eqref{multApostol}, where $g(n)$ is multiplicative, ensuring that $f(n)$ is also multiplicative. Noting that $\mu^2(n)=|\mu(n)|$ since the M{\"o}bius function only takes values of $\pm1$ and $0$ yields
$$\sum_{d|n}|\mu(d)| g(d)= \prod_{p|n}\left(1+g(p)\right).$$
Mapping $g$ to $f$ to maintain consistent notation and substituting into \eqref{finite2} completes the proof.
\end{proof}

Specializing $f(n)$ yields a variety of new formulae involving convolutions with $\omega(n)$. Below we use a variety of functions $f(n)$ along with \eqref{finite1}, \eqref{finitecor1}, and \eqref{finite2} to find new expressions for divisor sums involving $\omega(n)$.

\begin{thm} Let $n\in\N$. Then
\begin{equation}\label{finite4}
\sum_{d|n}|\mu(d)|\omega(d) = \omega(n) 2^{\omega(n)-1}.
\end{equation}
\end{thm}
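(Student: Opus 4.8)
The plan is to obtain \eqref{finite4} as a direct specialization of the identity \eqref{finite2}, choosing the simplest admissible multiplicative function. First I would take $f$ to be the constant function $f(n)=1$ for all $n\in\N$. This is completely multiplicative and satisfies $f(1)=1$, and in particular $f(p)=1\neq -1$ for every prime $p|n$, so the hypothesis of \eqref{finite2} is met and the identity applies.

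With this choice the left-hand side of \eqref{finite2} collapses to $\sum_{d|n}|\mu(d)|\omega(d)$, which is exactly the quantity we wish to evaluate, so it remains only to compute the two factors on the right. The product becomes $\prod_{p|n}(1+f(p))=\prod_{p|n}2=2^{\omega(n)}$, since there are precisely $\omega(n)$ distinct primes dividing $n$. The sum becomes $\sum_{p|n}\frac{f(p)}{1+f(p)}=\sum_{p|n}\frac{1}{2}=\frac{\omega(n)}{2}$. Multiplying these gives $2^{\omega(n)}\cdot\frac{\omega(n)}{2}=\omega(n)2^{\omega(n)-1}$, which is the right-hand side of \eqref{finite4}.

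Since this is a pure specialization, no genuine obstacle arises; the only points needing care are bookkeeping ones. I would explicitly note that the edge case $n=1$ is covered by the stated convention that an empty sum over $p|n$ equals $0$: both sides then vanish, consistent with $\omega(1)=0$ and the factor $2^{\omega(1)-1}=2^{-1}$ being multiplied by $0$. The main thing to double-check is simply that the admissibility condition $f(p)\neq -1$ holds, which it does trivially, so that \eqref{finite2} may be invoked without exception and the proof is immediate.
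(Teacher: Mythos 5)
Your proof is correct and follows exactly the paper's own route: substituting $f(n)=1$ into \eqref{finite2}, evaluating the product as $2^{\omega(n)}$ and the sum as $\frac{\omega(n)}{2}$, and multiplying. The only difference is your explicit check of the $n=1$ edge case, which the paper covers once and for all by its stated convention on empty sums over $p|n$.
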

\begin{proof}
Substituting $f(n)=1$ into \eqref{finite2} gives $\sum_{d|n}|\mu(d)|\omega(d) = \left(\prod_{p|n}2\right)\left( \sum_{p|n}\frac{1}{2}\right).$
Since the product and sum on the right-hand side are over the distinct primes that divide $n$, each is evaluated $\omega(n)$ times. This simplifies to $\sum_{d|n}|\mu(d)|\omega(d) =2^{\omega(n)} \frac{\omega(n)}{2}.$
\end{proof}

\begin{thm} Let $n\in\N$. Then
\begin{equation}\label{finite6}
\sum_{d|n}\mu(d)\omega(d){\left(\frac{n}{d}\right)}^k = {J_k(n)}\sum_{p|n}\frac{1}{1-p^k}.
\end{equation}
\end{thm}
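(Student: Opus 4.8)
The plan is to specialize the multiplicative function $f$ in \eqref{finite1} to an appropriate power function and then recognize the resulting product as Jordan's totient. The weight $\left(n/d\right)^k$ is not literally of the form $f(d)$, but since $\left(n/d\right)^k = n^k d^{-k}$ and $n^k$ is constant with respect to the divisor sum, I would first factor it out, writing
$$\sum_{d|n}\mu(d)\omega(d)\left(\frac{n}{d}\right)^k = n^k \sum_{d|n}\mu(d)\omega(d)\, d^{-k}.$$

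Next I would set $f(d) = d^{-k}$. This is completely multiplicative, hence multiplicative, so that \eqref{finite1} applies. The hypothesis $f(p)\neq 1$ is automatic: for any prime $p$ dividing $n$ we have $p \geqslant 2$, and since $k$ is a positive integer this gives $f(p) = p^{-k} < 1$. Applying \eqref{finite1} with this choice of $f$ yields
$$\sum_{d|n}\mu(d)\omega(d)\, d^{-k} = \left(\prod_{p|n}\left(1-p^{-k}\right)\right)\left(\sum_{p|n}\frac{p^{-k}}{p^{-k}-1}\right).$$

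The final step is to multiply through by $n^k$ and distribute this factor onto the product. Using the definition \eqref{def2} of Jordan's totient function, I would identify $n^k \prod_{p|n}\left(1-p^{-k}\right) = J_k(n)$ exactly. A routine algebraic simplification of the summand—clearing negative exponents by multiplying numerator and denominator by $p^k$—gives $\frac{p^{-k}}{p^{-k}-1} = \frac{1}{1-p^k}$, which produces precisely the right-hand side of \eqref{finite6}. There is no substantial obstacle here: the argument is essentially a matter of choosing the correct substitution, and the only care required is in bookkeeping the constant factor $n^k$ and verifying the fraction simplification.
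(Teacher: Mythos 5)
Your proposal is correct and follows essentially the same route as the paper: substituting $f(n)=1/n^k$ into \eqref{finite1}, identifying $n^k\prod_{p|n}\left(1-p^{-k}\right)=J_k(n)$ via \eqref{def2}, and simplifying $\frac{p^{-k}}{p^{-k}-1}=\frac{1}{1-p^k}$. The only cosmetic difference is that you factor out $n^k$ at the start while the paper multiplies it back in at the end; the substance is identical.
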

\begin{proof}
Substituting $f(n)=\frac{1}{n^k}$ into \eqref{finite1} gives
\begin{equation}\label{jordanOne}
\sum_{d|n}\frac{\mu(d)\omega(d)}{d^k} = \left(\prod_{p|n}\left(1-\frac{1}{p^k}\right) \right)\left(\sum_{p|n}\frac{1}{1-p^k}\right).
\end{equation}
Using \eqref{def2} to see that $\prod_{p|n}(1-\frac{1}{p^k}) = \frac{J_k(n)}{n^k}$ and substituting this relation into \eqref{jordanOne} completes the proof.
\end{proof}

\begin{thm} Let $n\in\N$. Then
\begin{equation}\label{finite9}
\sum_{d|n}|\mu(d)|\omega(d){\left(\frac{n}{d}\right)}^k = \frac{J_{2k}(n)}{J_k(n)}\sum_{p|n}\frac{1}{1+p^k}.
\end{equation}
\end{thm}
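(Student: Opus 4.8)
The plan is to specialize the completely parallel identity \eqref{finite2} rather than \eqref{finite1}, since the left-hand side here uses $|\mu(d)|$, which signals the $(1+f(p))$ version of the machinery. Concretely, I would substitute $f(n)=n^{-k}$ into \eqref{finite2}. The left-hand side then becomes $\sum_{d|n}|\mu(d)|\omega(d)\,d^{-k}$, and to match the stated form $\sum_{d|n}|\mu(d)|\omega(d)(n/d)^k$ I would pull out a factor of $n^k$: replacing $d$ by $n/d$ over squarefree divisors is harmless, or more simply I note that multiplying the whole identity by $n^k$ turns $d^{-k}$ into $(n/d)^k$ on the left. So the real work is on the right-hand side.

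With $f(p)=p^{-k}$, the right-hand side of \eqref{finite2} reads $\left(\prod_{p|n}(1+p^{-k})\right)\left(\sum_{p|n}\frac{p^{-k}}{1+p^{-k}}\right)$. First I would simplify the summand: $\frac{p^{-k}}{1+p^{-k}}=\frac{1}{p^k+1}$, which already matches the sum $\sum_{p|n}\frac{1}{1+p^k}$ in \eqref{finite9}. The remaining task is to identify the product $\prod_{p|n}(1+p^{-k})$, after multiplication by the $n^k$ I extracted, with the quotient $J_{2k}(n)/J_k(n)$.

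For that identification I would use the algebraic factorization $1-\frac{1}{p^{2k}}=\left(1-\frac{1}{p^k}\right)\left(1+\frac{1}{p^k}\right)$, so that $1+\frac{1}{p^k}=\frac{1-p^{-2k}}{1-p^{-k}}$. Taking the product over $p\mid n$ and invoking the definition \eqref{def2} of Jordan's totient in the forms $\prod_{p|n}(1-p^{-2k})=J_{2k}(n)/n^{2k}$ and $\prod_{p|n}(1-p^{-k})=J_k(n)/n^k$, I get $\prod_{p|n}(1+p^{-k})=\frac{J_{2k}(n)/n^{2k}}{J_k(n)/n^k}=\frac{J_{2k}(n)}{n^k J_k(n)}$. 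Multiplying by the $n^k$ from the left-hand side cancels the stray $n^k$ in the denominator and yields exactly $J_{2k}(n)/J_k(n)$, completing the proof.

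I do not anticipate a genuine obstacle here, as this is a direct specialization paralleling the proof of \eqref{finite6}; the only point demanding care is the bookkeeping of the $n^k$ factor that converts $\sum_{d|n}|\mu(d)|\omega(d)\,d^{-k}$ into the sum over $(n/d)^k$ and simultaneously clears the residual power of $n$ left over from expressing the product as a ratio of Jordan totients. Provided the $f(p)\neq -1$ hypothesis of \eqref{finite2} is satisfied—which holds automatically since $p^{-k}>0$—the argument goes through for all $n\in\N$ by the same squarefree-to-general reduction already established.
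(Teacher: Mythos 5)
Your proposal is correct and follows essentially the same route as the paper's own proof: substitute $f(n)=n^{-k}$ into \eqref{finite2}, simplify $\frac{p^{-k}}{1+p^{-k}}$ to $\frac{1}{1+p^k}$, and use the factorization $1-p^{-2k}=\left(1-p^{-k}\right)\left(1+p^{-k}\right)$ together with \eqref{def2} to identify $\prod_{p|n}\left(1+p^{-k}\right)$ with $\frac{1}{n^k}\frac{J_{2k}(n)}{J_k(n)}$, then clear the factor of $n^k$. The bookkeeping you flag is exactly what the paper does implicitly, so there is nothing to add.
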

\begin{proof}
Substituting $f(n)=\frac{1}{n^k}$ into \eqref{finite2} gives
$$\sum_{d|n}\frac{|\mu(d)|\omega(d)}{d^k} =\left(\prod_{p|n}\left(1+\frac{1}{p^k}\right) \right)\left(\sum_{p|n}\frac{1}{1+p^k}\right).$$
Now we complete the proof by using \eqref{def2} to see that 
$$\prod_{p|n}\left(1+\frac{1}{p^k}\right)=\frac{n^{2k}}{n^{2k}}\frac{\prod_{p|n}(1-\frac{1}{p^{2k}})}{\prod_{p|n}(1-\frac{1}{p^k})}=\frac{1}{n^k}\frac{J_{2k}(n)}{J_k(n)}.$$
\end{proof}

\begin{cor} Let $n\in\N$. Then
\begin{equation}
\sum_{d|n}|\mu(d)|\omega(d){\left(\frac{n}{d}\right)} = \psi(n)\sum_{p|n}\frac{1}{1+p}.
\end{equation}
\end{cor}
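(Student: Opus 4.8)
The plan is to specialize Theorem \eqref{finite9} to the case $k=1$ and then reduce the resulting ratio of Jordan totients to the Dedekind psi function. Setting $k=1$ in \eqref{finite9} immediately yields
$$\sum_{d|n}|\mu(d)|\omega(d)\left(\frac{n}{d}\right) = \frac{J_2(n)}{J_1(n)}\sum_{p|n}\frac{1}{1+p},$$
so the whole task reduces to establishing the identity $J_2(n)/J_1(n) = \psi(n)$, where $\psi(n) = n\prod_{p|n}\left(1+\frac{1}{p}\right)$ is the Dedekind psi function.

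To do this I would first expand both totients using the definition \eqref{def2}, obtaining $J_1(n) = n\prod_{p|n}\left(1-\frac{1}{p}\right)$ and $J_2(n) = n^2\prod_{p|n}\left(1-\frac{1}{p^2}\right)$. Forming the quotient, one factor of $n$ survives and the two products merge into a single product over the primes dividing $n$, giving $n\prod_{p|n}\frac{1-1/p^2}{1-1/p}$. The key algebraic step is then to factor the numerator as a difference of squares, $1-\frac{1}{p^2} = \left(1-\frac{1}{p}\right)\left(1+\frac{1}{p}\right)$, so that the factor $1-\frac{1}{p}$ cancels against the denominator and leaves $n\prod_{p|n}\left(1+\frac{1}{p}\right) = \psi(n)$. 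Substituting this back into the specialized identity completes the argument.

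There is no real obstacle here, since the result is a direct specialization; the only point requiring attention is presentational. Because the Dedekind psi function $\psi(n)$ has not been introduced earlier in the paper, I would state its product definition explicitly at the outset before identifying it with $J_2(n)/J_1(n)$, so that the final right-hand side is unambiguous.
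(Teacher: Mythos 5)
Your proposal is correct and follows exactly the paper's own route: specialize \eqref{finite9} at $k=1$ and identify $J_2(n)/J_1(n)$ with $\psi(n)$. The paper simply cites this identity while you verify it from \eqref{def2} via the factorization $1-\tfrac{1}{p^2}=\bigl(1-\tfrac{1}{p}\bigr)\bigl(1+\tfrac{1}{p}\bigr)$, a reasonable elaboration since $\psi(n)$ is not defined elsewhere in the paper.
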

\begin{proof}
Substituting $k=1$ into \eqref{finite9} and noting that $\psi(n)=\frac{J_{2}(n)}{J_1(n)}$ completes the proof.
\end{proof}

\begin{thm} Let $n\in\N$ with $n$ squarefree. Then
\begin{equation}
\sum_{d|n}\omega(d)d^k =  \frac{J_{2k}(n)}{J_k(n)} \sum_{p|n}\frac{p^k}{1+p^k}.
\end{equation}
\end{thm}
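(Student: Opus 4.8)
The plan is to reduce this to Theorem \eqref{finite2} by exploiting the squarefree hypothesis on $n$. The crucial first observation is that when $n$ is squarefree, every divisor $d$ of $n$ is itself squarefree, so $|\mu(d)| = 1$ for all $d \mid n$. This lets me rewrite the left-hand side as $\sum_{d|n}\omega(d)d^k = \sum_{d|n}|\mu(d)|\omega(d)d^k$, which is precisely the object appearing on the left of \eqref{finite2}.

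With this in hand, I would apply \eqref{finite2} to the multiplicative function $f(n) = n^k$, noting that $f(p) = p^k \neq -1$ for every prime so that the hypotheses are met. This yields
$$\sum_{d|n}|\mu(d)|\omega(d)d^k = \left(\prod_{p|n}(1+p^k)\right)\left(\sum_{p|n}\frac{p^k}{1+p^k}\right).$$
The sum factor already agrees with the claimed right-hand side, so everything reduces to identifying the product $\prod_{p|n}(1+p^k)$ with the ratio $\frac{J_{2k}(n)}{J_k(n)}$.

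For that final step I would factor $p^k$ out of each term, writing $1+p^k = p^k(1+p^{-k})$, and then invoke the squarefree assumption once more in the form $\prod_{p|n}p^k = n^k$. Combined with the identity $\prod_{p|n}(1+p^{-k}) = \frac{1}{n^k}\frac{J_{2k}(n)}{J_k(n)}$ already derived in the proof of Theorem \eqref{finite9}, the factor of $n^k$ cancels and the product collapses to $\frac{J_{2k}(n)}{J_k(n)}$, as desired.

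The only place demanding genuine care is the dual role of the squarefree hypothesis: it is needed not just to discard $|\mu(d)|$ on the left, but more essentially to validate the identity $\prod_{p|n}p^k = n^k$. This latter identity fails for non-squarefree $n$, since the product over distinct primes ignores higher multiplicities, and with it the clean equality $\prod_{p|n}(1+p^k) = \frac{J_{2k}(n)}{J_k(n)}$ breaks down. This explains why, unlike the earlier results of this section that hold for all $n \in \N$, the present theorem is stated only for squarefree $n$.
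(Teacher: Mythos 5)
Your proposal is correct and follows essentially the same route as the paper: both apply \eqref{finite2} with $f(n)=n^k$, use squarefreeness to drop $|\mu(d)|$ from the divisor sum, and use $n^k=\prod_{p|n}p^k$ to identify $\prod_{p|n}(1+p^k)$ with $\frac{J_{2k}(n)}{J_k(n)}$. The only cosmetic difference is that the paper verifies this last identity via $J_k(n)=\prod_{p|n}(p^k-1)$ and a difference-of-squares cancellation, while you factor out $p^k$ and reuse the identity from the proof of \eqref{finite9}; your closing remark on the dual role of the squarefree hypothesis is also accurate.
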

\begin{proof}
Substituting $f(n)=n^k$ into \eqref{finite2} gives
\begin{equation}\label{jordanTwo}
\sum_{d|n}|\mu(d)|\omega(d)d^k = \prod_{p|n}(1+{p^k}) \sum_{p|n}\frac{p^k}{1+p^k}.
\end{equation}
Now if we let $n$ be squarefree, then $n=\prod_{p|n} p$. We also know that \eqref{def2} transforms into 
$$J_k(n)=n^k\prod_{p|n}\left(1-\frac{1}{p^k}\right)=n^k \frac{\prod_{p|n}(p^k-1)}{\prod_{p|n}p^k}.$$ 
However we now have $n^k=\prod_{p|n}p^k$, so $J_k(n)=\prod_{p|n}(p^k-1)$. Then 
\begin{equation}\label{jordanThree}
\frac{J_{2k}(n)}{J_k(n)}=\frac{\prod_{p|n}(p^{2k}-1)}{\prod_{p|n}(p^k-1)}=\prod_{p|n}\frac{(p^{2k}-1)}{(p^k-1)}=\prod_{p|n}(p^k+1).
\end{equation}
Letting $n$ be a squarefree natural number in \eqref{finite2}, we can also eliminate $|\mu(d)|$ from the sum on the left-hand side since every divisor of $n$ will already be squarefree. Substituting \eqref{jordanThree} into \eqref{jordanTwo} completes the proof.
\end{proof}

\section{Infinite sums}\label{infinitesums}
%%%%%%%%%%%%%%%%%%%%%%%%%%%%%%%%%%%%%%%%%%%%%%%%%%%%%%%%%%%%%%%%%%%%%%%%%%%%%%%%%%%%
The propositions \eqref{main} and \eqref{mainpos} hold for a finite number of elements $x_i$. However, we can take the limit $i\rightarrow\infty$ to extend this sum. This enables us to find closed form product expressions for Dirichlet series, which are described in \cite[(27.4.4)]{NIST:DLMF}, of the form $\sum_{n\in\N}\frac{\omega(n)f(n)}{n^s}$ for many commonly encountered multiplicative functions $f(n)$.

For later convenience we introduce the prime zeta function, described in Fr{\"o}berg (1968) \cite[(0.1)]{PrimeZeta}, denoted by $P(s)$. We define it by
$$P(s) := \sum_{p}\frac{1}{p^s},$$
and note that it converges for $\Re(s)>1$. It is an analog of the Riemann zeta function, described in \cite[(25.2.1)]{NIST:DLMF}, with the sum taken over prime numbers instead of all natural numbers. For notational convenience we also define the shifted prime zeta function $P(s,a)$ as 
$$P(s,a) := \sum_{p}\frac{1}{p^s+a},$$
such that $P(s,0)=P(s)$.

\begin{lemma}\label{primezetacor}
Let $a\in\C$ and $|a|< 2$. Then P(s,a) converges absolutely if and only if $s\in\C$, $\Re(s)>1$. 
\end{lemma}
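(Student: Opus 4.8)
The plan is to reduce everything to a comparison with the ordinary prime zeta function $P(\sigma)=\sum_p p^{-\sigma}$, whose convergence behaviour is already available: it converges for $\Re(s)>1$ (as recorded above) and diverges for $\sigma\le 1$, the boundary case $\sigma=1$ being the classical divergence of $\sum_p 1/p$ and the cases $\sigma<1$ following since then $p^{-\sigma}\ge p^{-1}$. Writing $\sigma=\Re(s)$ throughout, the two triangle inequalities $p^\sigma-|a|\le|p^s+a|\le p^\sigma+|a|$ do the bulk of the work, sandwiching the general term $|p^s+a|^{-1}$ between two shifts of $p^{-\sigma}$.

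For the ``if'' direction, suppose $\sigma>1$. First I would use the hypothesis $|a|<2$ to check that the denominators never vanish: since $p\ge 2$ and $\sigma>1$ give $p^\sigma\ge 2^\sigma>2>|a|$, the reverse triangle inequality yields $|p^s+a|\ge p^\sigma-|a|>0$, so each term is well-defined and
$$\left|\frac{1}{p^s+a}\right|\le\frac{1}{p^\sigma-|a|}.$$
Since $\sigma>1>0$ we have $p^{-\sigma}\to 0$, whence $(p^\sigma-|a|)^{-1}/p^{-\sigma}=p^\sigma/(p^\sigma-|a|)\to 1$; the limit comparison test against the convergent series $P(\sigma)$ then gives absolute convergence.

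For the ``only if'' direction, suppose $\sigma\le 1$; I want to show the series of absolute values diverges. The upper bound $|p^s+a|\le p^\sigma+|a|$ gives
$$\left|\frac{1}{p^s+a}\right|\ge\frac{1}{p^\sigma+|a|},$$
so it suffices to show $\sum_p(p^\sigma+|a|)^{-1}$ diverges. Here I expect the one genuine wrinkle: if $\sigma\le 0$ the terms do not even tend to zero, since $p^\sigma\le 1$ forces $(p^\sigma+|a|)^{-1}\ge(1+|a|)^{-1}>0$, and divergence is immediate from the term test; whereas if $0<\sigma\le 1$ then $p^{-\sigma}\to 0$ and $(p^\sigma+|a|)^{-1}\sim p^{-\sigma}$, so limit comparison against the divergent $P(\sigma)$ finishes the job. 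In either subcase the series of absolute values diverges, completing the equivalence.

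The argument is essentially a sandwich, so there is no deep obstacle; the points demanding care are (i) invoking $|a|<2$ exactly where it is needed, namely to keep $p^s+a$ away from zero throughout the half-plane $\sigma>1$ so that the upper bound is finite and positive, and (ii) remembering that the limit comparison test presupposes the comparison terms tend to zero, which fails when $\sigma\le 0$ and must be handled by the term test instead.
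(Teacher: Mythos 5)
Your proof is correct and follows essentially the same route as the paper's (one-sentence) proof: both directions are comparisons of $|p^s+a|^{-1}$ against the prime zeta function via the triangle inequality, with divergence for $\sigma\le 1$ coming from comparison with $\sum_p 1/p$ and absolute convergence for $\sigma>1$ from an upper bound on the absolute values. Your write-up is simply a fleshed-out version, handling the case split $\sigma\le 0$ versus $0<\sigma\le 1$ that the paper's sketch leaves implicit.
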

\begin{proof}
The result follows from using a direct comparison test with $P(1)$ to prove the divergence of $P(1,a)$, then taking an absolute value to bound it above and prove absolute convergence for $\Re(s)>1$.
\end{proof}

\begin{lemma}\label{primezetacor2}
Let $a,s,k\in\C$ with $|a|<2$. Then $\sum_{p} \frac{p^k}{p^s+a}$ converges absolutely if and only if $\Re(s)>\max\left(1,1+\Re(k)\right)$.
\end{lemma}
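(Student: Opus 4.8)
The plan is to reduce the problem, exactly as in the proof of Lemma~\ref{primezetacor}, to a direct comparison between $\left|\frac{p^k}{p^s+a}\right|$ and a prime zeta function whose argument is $\Re(s)-\Re(k)$. Since each prime $p$ is a positive real, $|p^k|=p^{\Re(k)}$ and $|p^s|=p^{\Re(s)}$, so the absolute value of the general term is $\frac{p^{\Re(k)}}{|p^s+a|}$, and everything hinges on pinning $|p^s+a|$ between two constant multiples of $p^{\Re(s)}$ for large $p$.

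For the ``if'' direction I would assume $\Re(s)>\max(1,1+\Re(k))$. The inequality $\Re(s)>1$ guarantees $2^{\Re(s)}>2>|a|$, so for every prime $p\geq 2$ the reverse triangle inequality gives $|p^s+a|\geq p^{\Re(s)}-|a|\geq c\,p^{\Re(s)}$ for a fixed $c>0$; this is precisely where the first entry of the maximum is needed, namely to keep the denominator bounded below uniformly over \emph{all} primes, including the smallest. The general term is then dominated by $c^{-1}p^{\Re(k)-\Re(s)}$, and the second entry $\Re(s)>1+\Re(k)$ forces $\Re(s)-\Re(k)>1$, so the majorant $\sum_p p^{\Re(k)-\Re(s)}=P(\Re(s)-\Re(k))$ converges by Lemma~\ref{primezetacor}. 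Absolute convergence follows by direct comparison.

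For the ``only if'' direction I would argue contrapositively. When $\Re(s)\leq 1+\Re(k)$ the upper bound $|p^s+a|\leq p^{\Re(s)}+|a|\leq C\,p^{\Re(s)}$, valid for all large $p$, shows the term is bounded \emph{below} by a constant multiple of $p^{\Re(k)-\Re(s)}$ with $\Re(k)-\Re(s)\geq -1$, so comparison with the divergent $P(\Re(s)-\Re(k))$ forces divergence. The remaining case, and the main obstacle, is to rule out convergence in the range $1+\Re(k)<\Re(s)\leq 1$ (which is non-empty precisely when $\Re(k)<0$): here the crude comparison runs the wrong way, so the sharpness of the maximum must instead be extracted from the small-prime behaviour of the denominator under the hypothesis $|a|<2$. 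I expect pinning down this last regime --- reconciling the exact convergence threshold with the stated maximum --- to be the delicate step, whereas the comparison estimates in the other ranges are routine.
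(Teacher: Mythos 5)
Your ``if'' direction is correct, and it is in substance the paper's own argument made explicit: the paper's one-line proof (take absolute values, multiply top and bottom by $p^{-k}$, apply Lemma~\ref{primezetacor}) is exactly your lower bound $|p^s+a|\geq c\,p^{\Re(s)}$ followed by comparison with $P(\Re(s)-\Re(k))$. But the step you flagged as ``the delicate step'' is not a missing estimate --- it is unfillable, because the ``only if'' direction of the lemma is false in exactly that regime. Take $k=-10$ and $a=1$: for any real $s$ with $0<s\leq 1$ we have $\frac{p^{-10}}{p^{s}+1}\leq p^{-10}$, so the series converges absolutely, even though $\Re(s)\leq 1=\max(1,1+\Re(k))$. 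The same failure occurs for every $k$ with $\Re(k)<0$, throughout the nonempty strip $\max(0,1+\Re(k))<\Re(s)\leq 1$. The paper's own proof silently breaks there as well: after normalizing by $p^{-k}$ the shift becomes $ap^{-k}$, which is unbounded in $p$ when $\Re(k)<0$, so Lemma~\ref{primezetacor} (which needs a fixed shift of modulus less than $2$) does not apply, despite the paper's claim that the reduction works ``whether $\Re(k)\leq 0$ or not''. The correct repair is to add the hypothesis $\Re(k)\geq 0$: then $\max(1,1+\Re(k))=1+\Re(k)$, your problematic strip is empty, and your argument gives a complete proof; note the paper's later theorems only ever invoke the sufficiency direction, so they are unaffected by this restriction.

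A smaller defect: in your divergence case $\Re(s)\leq 1+\Re(k)$, the bound $|p^s+a|\leq C\,p^{\Re(s)}$ for large $p$ requires $\Re(s)\geq 0$; if $\Re(s)<0$ then $p^{\Re(s)}\to 0$ while $|a|$ stays fixed, so no constant $C$ exists, and divergence can genuinely fail there (with $a=1$, $k=-11$, $s=-10$ we have $\Re(s)\leq 1+\Re(k)$, yet $\sum_p\frac{p^{-11}}{p^{-10}+1}$ converges). Under the repaired hypothesis $\Re(k)\geq 0$ this is easily patched by splitting the divergence argument: for $\Re(s)\leq 0$ use $|p^s+a|\leq 1+|a|$, so the terms are at least $p^{\Re(k)}/(1+|a|)\geq 1/(1+|a|)$ and do not tend to zero; for $0\leq\Re(s)\leq 1+\Re(k)$ your comparison with the divergent $P(\Re(s)-\Re(k))$ stands.
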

\begin{proof}
The result follows from taking an absolute value and multiplying the top and bottom by $p^{-k}$, then applying Lemma \ref{primezetacor}, whether $\Re(k)\leq0$ or not.
\end{proof}

\begin{thm}\label{infmain} Let $f(n)$ be a multiplicative function, $s\in\C$,  $a_p:=\sum_{m=1}^{\infty}\frac{f({p}^m)}{{p}^{ms}}$, and $a_p \neq -1$ for any prime $p$. If $\sum_{n\in\N}\frac{f(n)}{n^s}$ and $\sum_{p}\frac{a_p}{1+a_p}$ both converge absolutely for $\sigma>\sigma_a$, then
\begin{equation}\label{inf1}
\sum_{n\in\N}\frac{\omega(n)f(n)}{n^s} = \left(\sum_{n\in\N}\frac{f(n)}{n^s}\right)\left(\sum_{p}\frac{a_p}{1+a_p}\right),
\end{equation}
which converges absolutely for $\sigma>\sigma_a$.
\end{thm}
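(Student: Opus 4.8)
The plan is to recognize the left-hand side of \eqref{inf1} as the infinite analogue of the quantity $\sum_{k=1}^{n}k e_k$ appearing in Corollary \eqref{mainpos}, and to obtain \eqref{inf1} by applying that corollary to finite truncations and then passing to the limit. Concretely, enumerate the primes as $p_1<p_2<\cdots$ and set $x_i=a_{p_i}$; the hypothesis $a_p\neq-1$ guarantees $x_i\in\C_{-1}$, so Corollary \eqref{mainpos} applies to $x_1,\ldots,x_N$ and gives
\[
\left(\prod_{i=1}^{N}\bigl(1+a_{p_i}\bigr)\right)\left(\sum_{i=1}^{N}\frac{a_{p_i}}{1+a_{p_i}}\right)=\sum_{k=1}^{N}k\, e_k,
\]
where $e_k$ is the $k$-th elementary symmetric function in $a_{p_1},\ldots,a_{p_N}$. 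I would then reinterpret the right-hand side exactly as in the proof of \eqref{finite2}: expanding each $a_{p_i}=\sum_{m\geq1}f(p_i^m)/p_i^{ms}$ and using multiplicativity of $f$ to merge the factors, $e_k$ becomes $\sum f(n)/n^s$ taken over those $n>1$ with $\omega(n)=k$ whose prime factors all lie among $p_1,\ldots,p_N$. Hence $\sum_{k=1}^{N}k\, e_k=\sum\omega(n)f(n)/n^s$, summed over all $n$ supported on the first $N$ primes.

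Granting absolute convergence (addressed below), I would let $N\to\infty$. On the left, $\prod_{i=1}^{N}(1+a_{p_i})\to\sum_{n\in\N}f(n)/n^s$ by the Euler product formula \cite[(11.8)]{Apostol}, while $\sum_{i=1}^{N}a_{p_i}/(1+a_{p_i})\to\sum_{p}a_p/(1+a_p)$ by hypothesis; on the right, the truncated sums exhaust every $n\in\N$ (the term $n=1$ contributes nothing since $\omega(1)=0$), so they tend to $\sum_{n\in\N}\omega(n)f(n)/n^s$. This yields \eqref{inf1}.

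The main obstacle is justifying that the weighted series converges absolutely and that the truncated sums genuinely converge to the full sum, since the weight $\omega(n)$ is unbounded. I would handle this with a nonnegative majorant: put $b_p:=\sum_{m\geq1}|f(p^m)|/p^{m\sigma}\geq0$, so that absolute convergence of $\sum_{n}f(n)/n^s$ is exactly the statement $\prod_{p}(1+b_p)=\sum_{n\in\N}|f(n)|/n^\sigma<\infty$ ($|f|$ being multiplicative). For nonnegative terms this forces $\sum_{p}b_p<\infty$, whence $\sum_{p}b_p/(1+b_p)<\infty$. Applying Corollary \eqref{mainpos} to the real numbers $b_{p_1},\ldots,b_{p_N}$ (all $\neq-1$) bounds $\sum\omega(n)|f(n)|/n^\sigma$ over $n$ supported on the first $N$ primes by $(\prod_{i=1}^{N}(1+b_{p_i}))(\sum_{i=1}^{N}b_{p_i}/(1+b_{p_i}))$, a quantity increasing in $N$ and bounded above by $(\sum_{n}|f(n)|/n^\sigma)(\sum_{p}b_p/(1+b_p))$. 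These uniformly bounded partial sums of a nonnegative series establish absolute convergence of $\sum_{n}\omega(n)f(n)/n^s$ for $\sigma>\sigma_a$, which in turn licenses both the rearrangement and the limit taken above.
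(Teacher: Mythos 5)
Your proof is correct, and its algebraic core --- substituting $x_i=a_{p_i}$ into Corollary \eqref{mainpos} and reading off the elementary symmetric functions $e_k$ as sums of $f(n)/n^s$ over integers with exactly $k$ distinct prime factors --- is the same as the paper's. Where you genuinely diverge is in the analytic treatment, and your route is the stronger one. The paper applies \eqref{mainpos} directly with infinitely many variables, proves a lemma identifying $e_k=\sum_{n\in S_k}f(n)/n^s$, and then justifies the rearrangement $\sum_k k e_k=\sum_n \omega(n)f(n)/n^s$ only by remarking that the weighted series, being a Dirichlet series, has \emph{some} abscissa of absolute convergence, so the rearrangement is valid ``for some $\sigma$''; the claim that everything holds on the full half-plane $\sigma>\sigma_a$ is then asserted rather than derived. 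You instead work with finite truncations over the first $N$ primes and supply precisely the missing estimate: the nonnegative majorant $b_p=\sum_{m\geq 1}|f(p^m)|/p^{m\sigma}$, the bound $\sum_p b_p\leq \prod_p(1+b_p)-1<\infty$ coming from absolute convergence of the Dirichlet series, and the resulting uniform bound on $\sum \omega(n)|f(n)|/n^{\sigma}$ over $n$ supported on the first $N$ primes, obtained by applying \eqref{mainpos} to the $b_{p_i}$ themselves. This establishes absolute convergence of the weighted series on all of $\sigma>\sigma_a$ \emph{before} any limit is taken, which then licenses both the rearrangement and the passage $N\to\infty$ (where you correctly invoke the Euler product for the first factor and the theorem's hypothesis for the second). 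In short: same combinatorial skeleton, but your truncate-and-majorize argument closes a genuine gap in the paper's convergence discussion and delivers the stated region of validity rather than an unspecified one.
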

\begin{proof}
We let $p_i$ denote the $i$\textsuperscript{th} prime number. We must choose a suitable $x_i$ to substitute into \eqref{mainpos}, so we let
$x_i = a_{p_i}$, $1\leq i<\infty$. We still retain the condition $x_i=a_{p_i}\neq -1$ to avoid dividing by $0$. Substituting this $x_i$ into \eqref{mainpos} gives
$$\sum_{k=1}^{\infty}ke_k = \prod_{p}\left( 1 + a_p \right) \sum_{p}\frac{a_p}{1+a_p.}$$
The product and sum on the right now go through every prime $p$ since each $x_i$ is in a one-to-one correspondence with a sum over the $i$th prime. We also have that the product over primes is the Euler product for the Dirichlet series $\sum_{n\in\N}\frac{f(n)}{n^s}$. We now prove that $e_k$ sums over every natural number $n$ that has $k$ distinct prime factors.

\begin{lemma}
Let $1\leq i<\infty$, $i\in\N$, $p_i$ denote the $i$\textsuperscript{th} prime, $f$ denote any multiplicative function, and $x_i=\sum_{m=1}^{\infty}\frac{f({p_i}^m)}{{p_i}^{ms}}$.
Furthermore, let $k\in\N$ and
\begin{equation}\label{defS}
 S_k:=\{n\in\N: \omega(n)=k\},
\end{equation}
so that $S_k$ is the set of natural numbers with $k$ distinct prime factors. If $\sum_{n\in\N}\frac{f(n)}{n^s}$  has an abscissa of absolute convergence $\sigma_a$, then
\begin{equation}\label{inf2}
e_k:=\sum_{1\leqslant i_1< i_2< \cdots < i_k \leqslant n }^{}x_{i_1}x_{i_2}\cdots x_{i_k} = \sum_{n \in S_k}\frac{f(n)}{n^s},
\end{equation}
which converges absolutely for $\sigma>\sigma_a$.
\end{lemma}

\begin{proof}
Let $1\leq i < \infty$, $i\in\N$, and $1\leq\epsilon_i<\infty$, $\epsilon_i\in\N$. We can directly evaluate $e_k$ as
$$e_k=\sum_{1\leqslant i_1< i_2 \cdots < i_k \leqslant n }\frac{f(p_{i_1}^{\epsilon_{i_1}})f(p_{i_2}^{\epsilon_{i_2}})\cdots f(p_{i_k}^{\epsilon_{i_k}})}{\left(p_{i_1}^{\epsilon_{i_1}}p_{i_2}^{\epsilon_{i_2}}\cdots p_{i_k}^{\epsilon_{i_k}}\right)^s}.$$
Here $\epsilon_i$ varies because it goes over every single power of $p$ which is present in $x_i$. Since $x_i$ is a subseries of $\sum_{n\in\N}\frac{f(n)}{n^s}$, it will also converge absolutely for $\sigma>\sigma_a$ and any rearrangement of its terms does not change the value of the sum. Since $f$ is multiplicative and each $p_i$ is coprime to the others by definition, we have 

$$e_k=\sum_{1\leqslant i_1< i_2 \cdots < i_k \leqslant n }\frac{f\left(p_{i_1}^{\epsilon_{i_1}}p_{i_2}^{\epsilon_{i_2}}\cdots p_{i_k}^{\epsilon_{i_k}}\right)}{\left(p_{i_1}^{\epsilon_{i_1}}p_{i_2}^{\epsilon_{i_2}}\cdots p_{i_k}^{\epsilon_{i_k}}\right)^s}.$$

If we take an arbitrary natural number $n$ with $k$ distinct prime factors, it will be present in the sum with the $k$\textsuperscript{th} symmetric function, $e_k$. The $k$\textsuperscript{th} symmetric function contains every natural number with $k$ prime factors, since $k$ dictates the number of terms that are multiplied together to form every term in $e_k$. The multiplicity also doesn't matter, since that varies with $\epsilon_i$ which is independent of $k$. 

We also know that by the fundamental theorem of arithmetic, there is a bijection between the natural numbers and the products of distinct primes with any multiplicity. This means that every product of distinct primes in the expression for $e_k$ corresponds to a natural number $n$. Taking it all together, it follows that $e_k$ goes over every natural number $n$ with $k$ distinct prime factors. Rewriting each product of primes as $n$ then gives equation \eqref{inf2}. Since $\sum_{n \in S_k}\frac{f(n)}{n^s}$ is a subseries of $\sum_{n\in\N}\frac{f(n)}{n^s}$, it will also converge absolutely for $\sigma>\sigma_a$.
\end{proof}

We then have $\sum_{k=1}^{\infty}k e_k = \sum_{k=1}^{\infty}k\sum_{n \in S_k}\frac{f(n)}{n^s}$, where $S_k$ is defined by \eqref{defS}. This means that as $k$ goes from $1$ to $\infty$ the sum of each $k e_k$ from the left-hand side can be interpreted to go over every natural number except $1$ because they have been partitioned based on how many distinct prime factors they have. The series fails to sum over $n=1$, which does not have any prime factors, but $\omega(1)=0$ so this does not affect the sum in any way.

We can also see $\omega(n)$ is the weight that's represented by $k$ since we can bring it inside the inner sum as $\omega(n)$ and rewrite the double sum as a sum over the natural numbers. We can also change the bottom limit from $k=1$ to $k=0$ since the $k=0$ term is $0$. This gives
$$\sum_{k=1}^{\infty}k\sum_{S_k}\frac{f(n)}{n^s}= \sum_{k=0}^{\infty}\sum_{S_k}\omega(n)\frac{f(n)}{n^s}= \sum_{n\in\N}\frac{\omega(n)f(n)}{n^s}.$$
The inner sum converges absolutely for $\sigma>\sigma_a$, but the sum over $n$ does not converge on this half plane in general. This rearrangement is valid if $\sum_{n\in\N}\frac{\omega(n)f(n)}{n^s}$ converges absolutely. However, as a Dirichlet series it is guaranteed to have an abscissa of absolute convergence and therefore the rearrangement is valid for some $\sigma$.

Simplifying \eqref{mainpos} finally shows that
$$\sum_{n\in\N}\frac{\omega(n)f(n)}{n^s} = \left(\sum_{n\in\N}\frac{f(n)}{n^s}\right)\left(\sum_{p}\frac{a_p}{1+a_p}\right).$$ 
The left-hand side has the same convergence criteria as the right-hand side. Therefore if $\sum_{n\in\N}\frac{f(n)}{n^s}$ has an abscissa of absolutely convergence $\sigma_a$ and $\sum_{p}\frac{a_p}{1+a_p}$ converges absolutely for some $\sigma>\sigma_b$, the left-hand side will converge absolutely for $\sigma>\max\left(\sigma_a, \sigma_b\right)$. This shows that weighting the terms of the Dirichlet series of any multiplicative function $f(n)$ by $\omega(n)$ multiplies the original series by a sum of $f$ over primes.
\end{proof}

\begin{thm} Let $f(n)$ be a completely multiplicative function, $s\in\C$, and $n\in\N$. If $\sum_{n\in\N}\frac{f(n)}{n^s}$ has an abscissa of absolute convergence $\sigma_a$, then
\begin{equation}\label{inf3}
\sum_{n\in\N}\frac{\omega(n)f(n)}{n^s} =  \left(\sum_{n\in\N}\frac{f(n)}{n^s}\right)\left(\sum_{p}\frac{f(p)}{p^s}\right),
\end{equation}
which converges absolutely for $\sigma>\sigma_a$.
\end{thm}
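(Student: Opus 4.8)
The plan is to recognize this theorem as a direct specialization of Theorem \ref{infmain}: once I show that, for a completely multiplicative $f$, the prime sum $\sum_p \frac{a_p}{1+a_p}$ collapses to $\sum_p \frac{f(p)}{p^s}$, equation \eqref{inf3} follows immediately from \eqref{inf1}. So the whole argument reduces to evaluating the local quantity $a_p$ in closed form and simplifying the factor $\frac{a_p}{1+a_p}$.

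First I would invoke complete multiplicativity to write $f(p^m)=f(p)^m$ for every $m\geq 1$, so that
$$a_p = \sum_{m=1}^{\infty}\frac{f(p)^m}{p^{ms}} = \sum_{m=1}^{\infty}\left(\frac{f(p)}{p^s}\right)^m.$$
This is a geometric series in the ratio $f(p)/p^s$. Writing $u:=f(p)/p^s$ and summing gives $a_p=\frac{u}{1-u}$, whence $1+a_p=\frac{1}{1-u}$ and therefore $\frac{a_p}{1+a_p}=u=\frac{f(p)}{p^s}$. Summing over all primes yields $\sum_p \frac{a_p}{1+a_p}=\sum_p \frac{f(p)}{p^s}$, which is precisely the factor appearing in \eqref{inf3}.

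Before substituting, I would dispatch the two hypotheses of Theorem \ref{infmain}. The condition $a_p\neq -1$ holds \emph{automatically}: from $1+a_p=\frac{1}{1-u}$ we see that $1+a_p$ never vanishes (the equation $\frac{u}{1-u}=-1$ reduces to $0=-1$ and has no solution), so no prime is excluded. The absolute convergence requirement is handled by viewing $\sum_p \frac{f(p)}{p^s}$ as a subseries of $\sum_{n\in\N}\frac{f(n)}{n^s}$, obtained by restricting $n$ to the primes; since the latter converges absolutely for $\sigma>\sigma_a$ by hypothesis, so does the former. With both series converging absolutely for $\sigma>\sigma_a$, Theorem \ref{infmain} applies and delivers \eqref{inf3} together with its stated half-plane of convergence.

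The only point that demands care — and the step I expect to be the main, if modest, obstacle — is justifying the geometric-series evaluation of $a_p$, which requires $|f(p)/p^s|<1$. I would extract this from the hypothesis itself: for $\sigma>\sigma_a$ the series $\sum_n \frac{f(n)}{n^s}$ converges absolutely, and the terms indexed by the powers of a fixed prime $p$ form the absolutely convergent subseries $\sum_{m\geq 1}|f(p)/p^s|^m$; a geometric series with nonnegative terms converges exactly when its ratio has modulus below $1$, forcing $|f(p)/p^s|<1$ for every prime $p$. This legitimizes the closed form for $a_p$ throughout the region $\sigma>\sigma_a$, after which the remaining computation is purely algebraic and the conclusion is immediate from \eqref{inf1}.
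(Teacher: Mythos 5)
Your proposal is correct and follows essentially the same route as the paper: both evaluate $a_p$ as a geometric series via $f(p^m)=f(p)^m$, deduce $|f(p)/p^s|<1$ from the absolute convergence of the prime-power subseries, simplify $\frac{a_p}{1+a_p}$ to $\frac{f(p)}{p^s}$, and substitute into \eqref{inf1}, noting that $\sum_p \frac{f(p)}{p^s}$ converges absolutely as a subseries of the full Dirichlet series. Your explicit verification that $a_p\neq -1$ holds automatically is a small tidiness improvement over the paper's proof, which leaves that hypothesis unremarked, but it does not change the argument.
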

\begin{proof}
If $\sigma>\sigma_a$, then $\sum_{m=1}^{\infty} \frac{f(p_i^m)}{{p_i}^{ms}}$ converges absolutely and therefore  $\left|\frac{f(p)}{p^s}\right|<1$. Then
$$x_i = a_{p_i}= \sum_{m=1}^{\infty} \frac{f(p_i^m)}{{p_i}^{ms}} =  \sum_{m=1}^{\infty} \frac{f(p_i)^m}{{p_i}^{ms}} = \frac{1}{1-{\frac{f(p_i)}{{p_i}^s}}}-1.$$
Replacing $a_p$ by $\left({1-{\frac{f(p)}{{p}^s}}}\right)^{-1}-1$ in \eqref{inf1} and simplifying completes the proof. We also note that $\sum_{n\in\N}\frac{f(p)}{p^s}$ is a subseries of $\sum_{n\in\N}\frac{f(n)}{n^s}$, so it will also converge absolutely for $\sigma>\sigma_a$ and we can simplify our convergence criterion. 
\end{proof}

\begin{thm} Let $f(n)$ be a multiplicative function. Let $\sum_{n\in\N}\frac{|\mu(n)|f(n)}{n^s}$ and $\sum_{p}\frac{f(p)}{p^s+f(p)}$ both converge absolutely for $\sigma>\sigma_a$. Assume that for all prime $p$ and $s \in \C$ such that $\sigma > \sigma_a$ we have $f(p) \neq - p^s$. Then
\begin{equation}\label{inf5}
\sum_{n\in\N}\frac{|\mu(n)|\omega(n)f(n)}{n^s} = \left(\sum_{n\in\N}\frac{|\mu(n)|f(n)}{n^s}\right) \left( \sum_{p}\frac{f(p)}{p^s+f(p)}\right),
\end{equation}
which converges absolutely for $\sigma>\sigma_a$.
\end{thm}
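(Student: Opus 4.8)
The plan is to avoid repeating the symmetric-function machinery of Theorem \ref{infmain} and instead deduce \eqref{inf5} by applying that theorem to a single cleverly chosen multiplicative function. First I would set $g(n) := |\mu(n)| f(n)$ and observe that $g$ is multiplicative: the function $|\mu(n)|$ is the characteristic function of the squarefree integers, hence multiplicative, and the pointwise product of two multiplicative functions is again multiplicative. Thus $g$ is an admissible input to Theorem \ref{infmain}.

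The key computation is the coefficient $a_p$ attached to $g$. Since $|\mu(p^m)| = 0$ for every $m \geq 2$ while $|\mu(p)| = 1$, we have $g(p^m) = 0$ for $m \geq 2$, so the series defining $a_p$ collapses to its first term, giving $a_p = g(p)/p^s = f(p)/p^s$. Consequently $\frac{a_p}{1+a_p} = \frac{f(p)/p^s}{1 + f(p)/p^s} = \frac{f(p)}{p^s + f(p)}$, and the non-vanishing requirement $a_p \neq -1$ of Theorem \ref{infmain} becomes precisely the stated hypothesis $f(p) \neq -p^s$.

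With these identifications in hand, the two absolute-convergence hypotheses of Theorem \ref{infmain} applied to $g$ read exactly that $\sum_{n\in\N} \frac{|\mu(n)| f(n)}{n^s}$ and $\sum_{p} \frac{f(p)}{p^s + f(p)}$ converge absolutely for $\sigma > \sigma_a$, which are our standing assumptions. Substituting into \eqref{inf1} and using $\omega(n) g(n) = |\mu(n)| \omega(n) f(n)$ then yields \eqref{inf5} directly, together with the asserted absolute convergence for $\sigma > \sigma_a$.

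I do not expect a serious obstacle, since the whole argument is a specialization of the already-established Theorem \ref{infmain}. The one point demanding care is the evaluation of $a_p$: one must use that the squarefree restriction annihilates all prime powers beyond the first, so that $a_p$ reduces to a \emph{single} term rather than the full geometric-type sum appearing in the completely multiplicative case. This is exactly what makes the translation of the hypotheses clean, and it is the step I would verify most carefully before invoking \eqref{inf1}.
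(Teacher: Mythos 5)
Your proposal is correct and follows essentially the same route as the paper: both apply Theorem \ref{infmain} to the multiplicative function $|\mu(n)|f(n)$, observe that $|\mu(p^m)|=0$ for $m\geq 2$ collapses $a_p$ to the single term $f(p)/p^s$, and then translate the hypothesis $a_p\neq -1$ and the convergence conditions accordingly. The only difference is notational (the paper writes the composite function as $f=|\mu|\cdot g$ and renames at the end, while you introduce $g=|\mu|\cdot f$ directly), which is immaterial.
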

\begin{proof}
To begin, we note that a product of multiplicative functions is also multiplicative. Letting $f(n) = |\mu(n)|g(n)$ in \eqref{inf1}, where $g$ is any multiplicative function which guarantees that $f$ is multiplicative, we can simplify $a_p$. We have that $|\mu(p^m)|$ is $0$ for $m\geq 2$ and $1$ for $m=1$, since $|\mu(n)|$ is the characteristic function of the squarefree integers. If $m=1$, we also have that $f(p)=|\mu(p)|g(p) = g(p)$, which means that $a_p=\sum_{m=1}^{\infty}\frac{f({p}^m)}{{p}^{ms}} = \frac{g(p)}{p^s}$. We still retain the $a_p=\frac{g(p)}{p^s}\neq -1$ condition. Assuming that the sum over primes converges, substituting into \eqref{inf1} gives
$$\sum_{n\in\N}\frac{|\mu(n)|\omega(n)g(n)}{n^s} = \left(\sum_{n\in\N}\frac{|\mu(n)|g(n)}{n^s}\right) \left(\sum_{p}\frac{\frac{g(p)}{p^s}}{1+\frac{g(p)}{p^s}}\right).$$
We utilize the same convergence criterion as Theorem \ref{infmain}. Simplifying the fraction and letting $g$ be represented by $f$ in order to maintain consistent notation completes the proof.
\end{proof}

\begin{thm} Let $f(n)$ be a multiplicative function. Let $\sum_{n\in\N}\frac{\mu(n)f(n)}{n^s}$ and $\sum_{p}\frac{f(p)}{p^s-f(p)}$ both converge absolutely for $\sigma>\sigma_a$. Assume that for all prime $p$ and $s \in \C$ such that $\sigma > \sigma_a$ we have $f(p) \neq p^s$. Then
\begin{equation}\label{inf6}
\sum_{n\in\N}\frac{\mu(n)\omega(n)f(n)}{n^s} = \left(\sum_{n\in\N}\frac{\mu(n)f(n)}{n^s}\right)\left(\sum_{p}\frac{f(p)}{f(p)-p^s}\right),
\end{equation}
which converges absolutely for $\sigma>\sigma_a$.
\end{thm}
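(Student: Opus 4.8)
The plan is to mirror the proof of \eqref{inf5}, replacing the squarefree indicator $|\mu(n)|$ by $\mu(n)$ so as to introduce a sign into the local factors. First I would set $f(n) = \mu(n) g(n)$ in Theorem \ref{infmain}, where $g$ is an arbitrary multiplicative function. Since a product of multiplicative functions is multiplicative, this $f$ is a legitimate input to \eqref{inf1}, and the associated Dirichlet series $\sum_{n\in\N}\frac{f(n)}{n^s}$ becomes $\sum_{n\in\N}\frac{\mu(n)g(n)}{n^s}$, which is the series hypothesized to converge absolutely for $\sigma>\sigma_a$.

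The key computation is the evaluation of $a_p = \sum_{m=1}^{\infty}\frac{f(p^m)}{p^{ms}} = \sum_{m=1}^{\infty}\frac{\mu(p^m)g(p^m)}{p^{ms}}$. Because $\mu(p^m)=0$ for $m\geq 2$ and $\mu(p)=-1$, only the $m=1$ term survives, so $a_p = -\frac{g(p)}{p^s}$. Feeding this into the prime factor of \eqref{inf1} and simplifying gives $\frac{a_p}{1+a_p} = \frac{-g(p)/p^s}{1-g(p)/p^s} = \frac{g(p)}{g(p)-p^s}$, which is exactly the summand on the right-hand side of \eqref{inf6} once $g$ is relabeled as $f$. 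Under this substitution the forbidden value $a_p=-1$ becomes $g(p)=p^s$, matching the hypothesis $f(p)\neq p^s$ for $\sigma>\sigma_a$, so no denominator vanishes and $\sum_p \frac{a_p}{1+a_p}$ is well defined.

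It then remains only to transport the convergence statement, and this is where I expect the only genuine care to be needed. The two absolute-convergence hypotheses of the present theorem are precisely the two hypotheses of Theorem \ref{infmain} rewritten in the new variables, provided one checks the sign bookkeeping: the prime series is stated here as $\sum_{p}\frac{f(p)}{p^s-f(p)}$, which is the negative of $\sum_{p}\frac{f(p)}{f(p)-p^s}=\sum_p\frac{a_p}{1+a_p}$, so the two have identical absolute convergence. Once this identification is in place, Theorem \ref{infmain} applies verbatim and yields both the identity \eqref{inf6} and its absolute convergence for $\sigma>\sigma_a$; the symmetric-function expansion and rearrangement machinery are inherited wholesale from the proof of \eqref{inf1}, so no new analytic input is required.
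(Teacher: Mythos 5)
Your proposal is correct and follows essentially the same route as the paper's own proof: substitute $f(n)=\mu(n)g(n)$ into Theorem~\ref{infmain}, observe that $a_p=-g(p)/p^s$ since $\mu(p^m)=0$ for $m\geq 2$, simplify $\frac{a_p}{1+a_p}$ to $\frac{g(p)}{g(p)-p^s}$, and relabel $g$ as $f$. Your explicit sign bookkeeping (identifying $a_p\neq -1$ with $f(p)\neq p^s$, and noting the prime series in the hypothesis is the negative of $\sum_p \frac{a_p}{1+a_p}$) is slightly more careful than the paper's wording, but it is the same argument.
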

\begin{proof}
We let $f(n) = \mu(n)g(n)$ in \eqref{inf1}, where $g$ is any multiplicative function which guarantees that $f$ is multiplicative. We can then simplify $a_p$, since $a_p:=\sum_{m=1}^{\infty}\frac{f({p}^m)}{{p}^{ms}} = -\frac{g(p)}{p^s}$. We still retain the $a_p = -\frac{g(p)}{p^s}\neq -1$ condition. Assuming that the sum over primes converges, substituting into \eqref{inf1} gives
$$\sum_{n\in\N}\frac{\mu(n)\omega(n)g(n)}{n^s} = \left(\sum_{n\in\N}\frac{\mu(n)g(n)}{n^s}\right)\left(\sum_{p}\frac{-\frac{g(p)}{p^s}}{1-\frac{g(p)}{p^s}}\right).$$
We utilize the same convergence criterion as Theorem \ref{infmain}.  Simplifying the fraction and letting $g$ be represented by $f$ in order to maintain consistent notation completes the proof.
\end{proof}

We note that the following proposition, the simplest application of \eqref{inf1}, can be found in \cite[(D-17)]{Catalog}.
\begin{thm} Let $\zeta(s)$ be the Riemann zeta function. For $s\in\C$ such that $\sigma>1$ ,
\begin{equation}
\sum_{n\in\N}\frac{\omega(n)}{n^s} = \zeta(s) P(s).
\end{equation}
\end{thm}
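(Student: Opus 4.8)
The plan is to specialize Theorem~\ref{infmain}, that is, equation~\eqref{inf1}, to the constant function $f(n)=1$. This is the most economical choice because $f\equiv 1$ is multiplicative and its Dirichlet series is precisely $\zeta(s)$, so the factor $\sum_{n}f(n)/n^s$ in \eqref{inf1} is immediately recognizable and the remaining prime sum will collapse to $P(s)$.

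First I would compute the two ingredients of \eqref{inf1}. The Dirichlet series is
\[
\sum_{n\in\N}\frac{f(n)}{n^s}=\sum_{n\in\N}\frac{1}{n^s}=\zeta(s),
\]
which has abscissa of absolute convergence $\sigma_a=1$. Next, for each prime $p$,
\[
a_p=\sum_{m=1}^{\infty}\frac{f(p^m)}{p^{ms}}=\sum_{m=1}^{\infty}\frac{1}{p^{ms}}=\frac{1}{p^s-1},
\]
a convergent geometric series for $\sigma>1$. The key algebraic step is the simplification
\[
\frac{a_p}{1+a_p}=\frac{\frac{1}{p^s-1}}{1+\frac{1}{p^s-1}}=\frac{1}{p^s},
\]
so that $\sum_{p}\frac{a_p}{1+a_p}=\sum_{p}\frac{1}{p^s}=P(s)$.

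Before invoking \eqref{inf1} I would verify its hypotheses. The condition $a_p\neq-1$ holds for every prime, since $\frac{1}{p^s-1}=-1$ would force $p^s=0$. Both $\zeta(s)$ and $P(s)$ converge absolutely for $\sigma>1$, so we may take $\sigma_a=1$ in Theorem~\ref{infmain}. Substituting the computed values into \eqref{inf1} then yields
\[
\sum_{n\in\N}\frac{\omega(n)}{n^s}=\zeta(s)\,P(s),
\]
absolutely convergent for $\sigma>1$.

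I do not expect any genuine obstacle: the entire content is the substitution $f\equiv 1$ together with the telescoping identity $a_p/(1+a_p)=1/p^s$. One could equally invoke the completely multiplicative specialization~\eqref{inf3}: since $f\equiv1$ is completely multiplicative with $f(p)=1$ for every prime, that formula gives $\sum_{n\in\N}\omega(n)/n^s=\zeta(s)\sum_{p}1/p^s=\zeta(s)P(s)$ in a single line, bypassing even the computation of $a_p$.
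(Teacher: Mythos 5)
Your proposal is correct and matches the paper's approach: the paper simply sets $f(n)=1$ in the completely multiplicative specialization \eqref{inf3}, which is exactly the one-line alternative you note at the end, and your main route through \eqref{inf1} merely inlines the geometric-series computation ($a_p = \frac{1}{p^s-1}$, $\frac{a_p}{1+a_p}=\frac{1}{p^s}$) by which \eqref{inf3} is derived from \eqref{inf1} in the first place. Both verifications of convergence and of the hypothesis $a_p\neq -1$ are sound.
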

\begin{proof}
We let $f(n) = 1$ in \eqref{inf3}, since this is a completely multiplicative function. We then note that $\sum_{p}\frac{1}{p^s}$ is the prime zeta function. The zeta and prime zeta functions both converge absolutely for $\sigma>1$, so the left-hand side will too.
\end{proof}

\begin{thm} Let $\lambda(s)$ be Liouville's function. For $s\in\C$ such that $\sigma>1$,
\begin{equation}
\sum_{n\in\N}\frac{\omega(n)\lambda(n)}{n^s} = -\frac{\zeta(2s)}{\zeta(s)} P(s).
\end{equation}
\end{thm}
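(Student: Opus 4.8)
The plan is to recognize that Liouville's function $\lambda(n)=(-1)^{\Omega(n)}$, where $\Omega(n)$ counts prime factors with multiplicity, is completely multiplicative, so that the completely multiplicative case \eqref{inf3} applies directly with $f=\lambda$. That theorem then gives
$$\sum_{n\in\N}\frac{\omega(n)\lambda(n)}{n^s}=\left(\sum_{n\in\N}\frac{\lambda(n)}{n^s}\right)\left(\sum_{p}\frac{\lambda(p)}{p^s}\right),$$
valid wherever $\sum_{n\in\N}\lambda(n)/n^s$ converges absolutely, so the remaining work is simply to evaluate the two factors.

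First I would handle the prime sum. Since every prime appears to the first power, $\lambda(p)=(-1)^1=-1$ for all $p$, whence $\sum_{p}\lambda(p)/p^s=-\sum_{p}1/p^s=-P(s)$, which converges absolutely for $\sigma>1$. Next I would identify the Dirichlet series of $\lambda$. Using the Euler product for the completely multiplicative $\lambda$ together with $\lambda(p)=-1$,
$$\sum_{n\in\N}\frac{\lambda(n)}{n^s}=\prod_{p}\frac{1}{1+p^{-s}}=\prod_{p}\frac{1-p^{-s}}{1-p^{-2s}}=\frac{\zeta(2s)}{\zeta(s)},$$
where I have used $\prod_{p}(1-p^{-s})^{-1}=\zeta(s)$. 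This product converges absolutely for $\sigma>1$, so here $\sigma_a=1$.

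Substituting both evaluations yields the claimed identity, $\sum_{n\in\N}\omega(n)\lambda(n)/n^s=-\zeta(2s)P(s)/\zeta(s)$, and since both factors converge absolutely for $\sigma>1$, so does the left-hand side. The computation is entirely routine; the only point requiring a moment's care is the factorization $\sum_{n\in\N}\lambda(n)/n^s=\zeta(2s)/\zeta(s)$, which is the sole nontrivial input and which follows immediately from the Euler product once $\lambda(p)=-1$ is noted.
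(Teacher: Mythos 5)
Your proof is correct and follows essentially the same route as the paper: apply the completely multiplicative case \eqref{inf3} with $f=\lambda$, note $\lambda(p)=-1$ to get $-P(s)$, and identify $\sum_{n\in\N}\lambda(n)/n^s=\zeta(2s)/\zeta(s)$. The only cosmetic difference is that you derive this last identity from the Euler product directly, whereas the paper cites it from the DLMF; both give the same convergence region $\sigma>1$.
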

\begin{proof}
We let $f(n) = \lambda(n)$ in \eqref{inf3}. Liouville's function, found in \cite[(27.2.13)]{NIST:DLMF}, is completely multiplicative. We note that $\lambda(p)=-1$ for every prime $p$, since they trivially only have a single prime divisor with multiplicity $1$. This gives
$$\sum_{n\in\N}\frac{\omega(n)\lambda(n)}{n^s} =\left( \sum_{n\in\N}\frac{\lambda(n)}{n^s}\right)\left( \sum_{p}\frac{(-1)}{p^s}\right).$$
We then note that \cite[(27.4.7)]{NIST:DLMF} gives $\sum_{n\in\N}\frac{\lambda(n)}{n^s}=\frac{\zeta(2s)}{\zeta(s)}$ and states that it converges for $\sigma>1$, which completes the proof since both the zeta and prime zeta functions converge for $\sigma>1$.
\end{proof}

\begin{thm} Let $\chi(n)$ denote a Dirichlet character. For $s\in\C$ such that $\sigma>1$ ,   
\begin{equation}
\sum_{n\in\N}\frac{\omega(n)\chi(n)}{n^s} = L(s,\chi) \sum_{p}\frac{\chi(p)}{p^s}.
\end{equation}
\end{thm}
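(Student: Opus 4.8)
The plan is to recognize this statement as an immediate specialization of the completely multiplicative case \eqref{inf3}, exactly paralleling the Liouville-function and zeta-function theorems that precede it. First I would recall the standard fact that every Dirichlet character $\chi$ is completely multiplicative, satisfying $\chi(mn)=\chi(m)\chi(n)$ for all $m,n\in\N$ together with $\chi(1)=1$. Consequently $\chi$ meets the hypotheses of the theorem establishing \eqref{inf3}, and I may substitute $f(n)=\chi(n)$ directly into that identity to obtain
$$\sum_{n\in\N}\frac{\omega(n)\chi(n)}{n^s} = \left(\sum_{n\in\N}\frac{\chi(n)}{n^s}\right)\left(\sum_{p}\frac{\chi(p)}{p^s}\right).$$

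The second step is purely a matter of naming: the Dirichlet $L$-function is defined by $L(s,\chi):=\sum_{n\in\N}\frac{\chi(n)}{n^s}$, so the first factor on the right is exactly $L(s,\chi)$, which yields the claimed closed form once this substitution is made.

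The remaining step is to confirm the convergence assertion, and here the key observation is the uniform bound $|\chi(n)|\leqslant 1$ for all $n$. This gives $\sum_{n\in\N}\frac{|\chi(n)|}{n^\sigma}\leqslant\sum_{n\in\N}\frac{1}{n^\sigma}=\zeta(\sigma)$, which converges for $\sigma>1$; hence the abscissa of absolute convergence of $L(s,\chi)$ satisfies $\sigma_a\leqslant 1$, so $L(s,\chi)$ converges absolutely on $\sigma>1$. Since $\sum_{p}\frac{\chi(p)}{p^s}$ is a subseries of $\sum_{n\in\N}\frac{\chi(n)}{n^s}$, it inherits absolute convergence on the same half-plane, exactly as in the argument used for the completely multiplicative theorem. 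Invoking \eqref{inf3} then guarantees the left-hand side converges absolutely for $\sigma>1$.

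I do not expect a genuine obstacle here, since the result is a routine corollary of \eqref{inf3}; the only point requiring a moment's care is verifying that \emph{both} factors converge absolutely on $\sigma>1$ so that the product identity is valid throughout that half-plane, and this follows at once from the bound $|\chi(n)|\leqslant 1$.
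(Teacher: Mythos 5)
Your proof is correct and follows essentially the same route as the paper: both substitute $f(n)=\chi(n)$ into \eqref{inf3}, identify the resulting Dirichlet series as $L(s,\chi)$, and conclude absolute convergence of both factors for $\sigma>1$. The only cosmetic difference is that you verify convergence directly from the bound $|\chi(n)|\leqslant 1$ by comparison with $\zeta(\sigma)$, whereas the paper cites the standard reference for the convergence of $L$-series; this is the same fact, just made self-contained.
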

\begin{proof}
We let $f(n) = \chi(n)$ in \eqref{inf3}. Here $\chi(n)$ is a Dirichlet character, found in \cite[(27.8.1)]{NIST:DLMF}, which is a completely multiplicative function that is periodic with period $k$ and vanishes for $(n,k)>1$. Substituting it in \eqref{inf3} gives
$$\sum_{n\in\N}\frac{\omega(n)\chi(n)}{n^s} = \left(\sum_{n\in\N}\frac{\chi(n)}{n^s}\right)\left( \sum_{p}\frac{\chi(p)}{p^s}\right).$$
We then note that a Dirichlet $L$-series, an important number theoretic series, is defined in \cite[(25.15.1)]{NIST:DLMF} as $L(s, \chi) = \sum_{n\in\N}\frac{\chi(n)}{n^s}.$
Simplifying to write the sum over $n$ as an $L$-series while noting that \cite[(25.15.1)]{NIST:DLMF} states that an $L$-series converges absolutely for $\sigma>1$ completes the proof, since both the $L$ series and prime $L$ series converge absolutely for $\sigma>1$.
\end{proof}

\begin{thm} For $s\in\C$ such that $\sigma>1$ ,   
\begin{equation}
\sum_{n\in\N}\frac{|\mu(n)|\omega(n)}{n^s} = \frac{\zeta(s)}{\zeta(2s)} P(s,1).
\end{equation}
\end{thm}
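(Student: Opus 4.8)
The plan is to apply Theorem~\eqref{inf5} with the completely multiplicative function $f(n)=1$, since this is the simplest admissible choice and immediately produces both factors on the right-hand side. With $f\equiv 1$ we have $f(p)=1$ for every prime $p$, so the left-hand side of \eqref{inf5} becomes exactly $\sum_{n\in\N}\frac{|\mu(n)|\omega(n)}{n^s}$, the quantity we wish to evaluate.

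First I would identify the two factors on the right. The prime sum is $\sum_{p}\frac{f(p)}{p^s+f(p)}=\sum_{p}\frac{1}{p^s+1}$, which is precisely the shifted prime zeta function $P(s,1)$ by its definition. The remaining Dirichlet series is $\sum_{n\in\N}\frac{|\mu(n)|}{n^s}$, the generating series of the characteristic function of the squarefree integers; here I would invoke the standard identity $\sum_{n\in\N}\frac{|\mu(n)|}{n^s}=\frac{\zeta(s)}{\zeta(2s)}$, which follows from the Euler product $\prod_{p}\left(1+p^{-s}\right)=\prod_{p}\frac{1-p^{-2s}}{1-p^{-s}}$, since $|\mu(p^m)|$ contributes only for $m=0$ and $m=1$. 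Multiplying the two factors then yields $\frac{\zeta(s)}{\zeta(2s)}P(s,1)$, exactly as claimed.

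The only points requiring care are the hypotheses of \eqref{inf5} with $\sigma_a=1$. I would verify absolute convergence of both factors on the half-plane $\sigma>1$: the series $\frac{\zeta(s)}{\zeta(2s)}$ converges absolutely there because $\zeta(s)$ does, while $P(s,1)$ converges absolutely for $\Re(s)>1$ by Lemma~\ref{primezetacor}, since $a=1$ satisfies $|a|<2$. Finally, the non-vanishing condition $f(p)\neq -p^s$ reduces to $p^s\neq -1$, which holds for $\sigma>1$ because $|p^s|=p^{\sigma}>1$; thus the left-hand side converges absolutely for $\sigma>1$ as well.

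I do not expect any substantive obstacle, as the result is the direct specialization $f\equiv 1$ of \eqref{inf5}. The entire content of the argument lies in recognizing the two factors, namely in recalling the squarefree generating identity $\sum_{n\in\N}|\mu(n)|n^{-s}=\zeta(s)/\zeta(2s)$ and in citing Lemma~\ref{primezetacor} to pin down the abscissa of absolute convergence at $\sigma=1$.
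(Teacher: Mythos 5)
Your proposal is correct and follows essentially the same route as the paper: specialize \eqref{inf5} to $f(n)=1$, identify $\sum_{n\in\N}|\mu(n)|n^{-s}=\zeta(s)/\zeta(2s)$, and invoke Lemma~\ref{primezetacor} for the absolute convergence of $P(s,1)$ on $\sigma>1$. The extra details you supply (the Euler-product derivation of the squarefree identity and the explicit check that $p^s\neq -1$) are fine but not a different method, merely a more careful writing of the paper's own argument.
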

\begin{proof}
We let $f(n) = 1$ in \eqref{inf5} and note that \cite[(27.4.8)]{NIST:DLMF} states that $\sum_{n\in\N}\frac{|\mu(n)|}{n^s} = \frac{\zeta(s)}{\zeta(2s)}$ and that this converges for $\sigma>1$. We note that \eqref{primezetacor} states that $P(s,1)$ will also converge absolutely for $\sigma>1$, which completes the proof.
\end{proof}

\begin{thm}  For $s\in\C$ such that $\sigma>1$ ,   
\begin{equation}
\sum_{n\in\N}\frac{\mu(n)\omega(n)}{n^s} = -\frac{1}{\zeta(s)} P(s,-1).
\end{equation}
\end{thm}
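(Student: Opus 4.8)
The plan is to specialize the general Dirichlet series identity \eqref{inf6} to the constant multiplicative function $f(n)=1$, exactly as the two preceding theorems specialized \eqref{inf3} and \eqref{inf5}. First I would substitute $f(n)=1$ into \eqref{inf6}. On the left this immediately produces $\sum_{n\in\N}\frac{\mu(n)\omega(n)}{n^s}$, the desired sum. On the right, the first factor becomes $\sum_{n\in\N}\frac{\mu(n)}{n^s}$, which is the standard Dirichlet series of the M{\"o}bius function and equals $\frac{1}{\zeta(s)}$ for $\sigma>1$. The second factor becomes $\sum_{p}\frac{1}{1-p^s}$, since $f(p)=1$ gives $\frac{f(p)}{f(p)-p^s}=\frac{1}{1-p^s}$.

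The key algebraic step is then to rewrite this prime sum in terms of the shifted prime zeta function. Writing $\frac{1}{1-p^s}=-\frac{1}{p^s-1}$ and summing over all primes gives $\sum_{p}\frac{1}{1-p^s}=-\sum_{p}\frac{1}{p^s-1}=-P(s,-1)$, using the definition $P(s,a)=\sum_{p}\frac{1}{p^s+a}$ with $a=-1$. Combining the two factors yields $\frac{1}{\zeta(s)}\cdot(-P(s,-1))=-\frac{1}{\zeta(s)}P(s,-1)$, which is the claimed closed form.

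What remains is to verify that the hypotheses of \eqref{inf6} hold on the half-plane $\sigma>1$. The condition $f(p)\neq p^s$ reads $1\neq p^s$, which holds since $|p^s|=p^{\sigma}>1$ when $\sigma>1$. For the convergence requirements, I would invoke Lemma \ref{primezetacor}: since $a=-1$ satisfies $|a|=1<2$, the sum $\sum_{p}\frac{1}{p^s-1}=P(s,-1)$ converges absolutely precisely for $\sigma>1$; and $\sum_{n\in\N}\frac{\mu(n)}{n^s}$ converges absolutely for $\sigma>1$ as it is dominated by $\zeta(\sigma)$. Both factors therefore converge absolutely for $\sigma>1$, so \eqref{inf6} applies and the resulting series on the left converges absolutely for $\sigma>1$ as well.

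I do not expect a genuine obstacle here, as this is the most routine of the specializations of \eqref{inf6}; the only point demanding care is the bookkeeping of the sign, namely correctly identifying $\sum_{p}\frac{1}{1-p^s}$ as $-P(s,-1)$ rather than $+P(s,-1)$, together with checking that the shift parameter $a=-1$ falls within the range $|a|<2$ required by Lemma \ref{primezetacor}.
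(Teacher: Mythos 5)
Your proposal is correct and takes essentially the same route as the paper's own proof: both specialize \eqref{inf6} to $f(n)=1$, identify $\sum_{n\in\N}\frac{\mu(n)}{n^s}=\frac{1}{\zeta(s)}$ for $\sigma>1$, and invoke Lemma \ref{primezetacor} for the absolute convergence of $P(s,-1)$. The only difference is that you spell out the sign bookkeeping $\sum_{p}\frac{1}{1-p^s}=-P(s,-1)$ and the condition $f(p)\neq p^s$ explicitly, which the paper leaves implicit.
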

\begin{proof}
We let $f(n) = 1$ in \eqref{inf6} and note that \cite[(27.4.5)]{NIST:DLMF} states $\sum_{n\in\N}\frac{\mu(n)}{n^s}=\frac{1}{\zeta(s)}$ and that it converges for $\sigma>1$. We note that \eqref{primezetacor} states that $P(s,-1)$ will also converge absolutely for $\sigma>1$, which completes the proof.
\end{proof}

While the previous sums have involved completely multiplicative functions or convolutions with the M{\"o}bius function, we can sometimes directly evaluate $\sum_{m=1}^{\infty}\frac{f({p}^m)}{{p}^{ms}}$. Taking \eqref{inf1} but converting $\sum_{n\in\N}\frac{f(n)}{n^s}$ back to its Euler product means that 
$$\sum_{n\in\N}\frac{\omega(n)f(n)}{n^s} = \prod_{p}\left(1+a_p\right) \sum_{p}\left(\frac{a_p}{1+a_p}\right),$$
where $a_p:=\sum_{m=1}^{\infty}\frac{f({p}^m)}{{p}^{ms}}$. We can extract the coefficient $a_p$ through a variety of methods.

\begin{thm} For $s\in\C$ such that $\sigma>1$,   
\begin{equation}
\sum_{n\in\N}\frac{\omega(n)2^{\omega(n)}}{n^s} = 2\frac{\zeta^2(s)}{\zeta(2s)} P(s,1).
\end{equation}
\end{thm}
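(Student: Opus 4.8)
The plan is to apply the master identity \eqref{inf1} of Theorem \ref{infmain} with the multiplicative function $f(n)=2^{\omega(n)}$, so that the entire argument reduces to evaluating the two factors on the right-hand side. The one point that must be handled with care is the behavior of $f$ on prime powers: since $\omega$ is additive, $2^{\omega(n)}$ is multiplicative with $f(1)=1$, but because $\omega(p^m)=1$ for every $m\geq 1$ we get $f(p^m)=2$ \emph{independently} of $m$. The exponent does not grow with $m$, and recognizing this collapse is the crux of the computation.

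First I would compute the coefficient $a_p$ from \eqref{infap}. Summing the geometric series $a_p=\sum_{m=1}^{\infty}2\,p^{-ms}=2/(p^s-1)$ for $\sigma>1$, a short simplification gives $a_p/(1+a_p)=2/(p^s+1)$; note also that $a_p=-1$ would force $p^s=-1$, which is impossible for $\sigma>1$, so the hypothesis $a_p\neq -1$ is satisfied. Hence the prime sum in \eqref{inf1} becomes $\sum_p 2/(p^s+1)=2\,P(s,1)$.

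Next I would evaluate the Dirichlet series $\sum_{n\in\N}2^{\omega(n)}/n^s$. Writing it as the Euler product $\prod_p(1+a_p)=\prod_p (p^s+1)/(p^s-1)$ and using the factorization $1-p^{-2s}=(1-p^{-s})(1+p^{-s})$, each local factor matches the corresponding factor of $\zeta^2(s)/\zeta(2s)$, yielding $\sum_{n\in\N}2^{\omega(n)}/n^s=\zeta^2(s)/\zeta(2s)$. Substituting both computed factors into \eqref{inf1} then gives the claimed closed form $2\,\zeta^2(s)\,P(s,1)/\zeta(2s)$.

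Finally I would address convergence. The series $\zeta^2(s)/\zeta(2s)$ converges absolutely for $\sigma>1$, while Lemma \ref{primezetacor}, applied with $a=1$ (where $|a|=1<2$), guarantees that $P(s,1)$ converges absolutely precisely for $\sigma>1$; thus both hypotheses of Theorem \ref{infmain} hold with $\sigma_a=1$, and the identity is valid for $\sigma>1$. The only genuine obstacle is the prime-power collapse $f(p^m)=2$ noted above; once that is in hand, the evaluation of $a_p$ and the recognition of the $\zeta^2(s)/\zeta(2s)$ Euler product are both routine.
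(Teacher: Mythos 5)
Your proof is correct and follows essentially the same route as the paper: apply Theorem \ref{infmain} with $f(n)=2^{\omega(n)}$, use the collapse $f(p^m)=2$ to sum $a_p=2/(p^s-1)$ as a geometric series, simplify $a_p/(1+a_p)=2/(p^s+1)$ to get $2P(s,1)$, and identify $\sum_n 2^{\omega(n)}/n^s$ with $\zeta^2(s)/\zeta(2s)$. The only cosmetic difference is that you derive that last identity from the Euler product $\prod_p(p^s+1)/(p^s-1)$ while the paper simply cites it, and your explicit checks of $a_p\neq -1$ and of the convergence hypotheses are slightly more careful than the paper's.
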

\begin{proof}
We know from \cite[(27.4.9)]{NIST:DLMF} that $\sum_{n\in\N}\frac{2^{\omega(n)}}{n^s}=\frac{\zeta^2(s)}{\zeta(2s)}$ and that it converges for $\Re(s)>1$, where $2^{\omega(n)}$ is the number of squarefree divisors of $n$.
We now directly evaluate $a_p$, summing it as a geometric series. We note that $2^{\omega\left(p^m\right)}=2$, since $p^m$ trivially has a single distinct prime factor. Substituting into the formula for $a_p$ shows that 
$$a_p=\sum_{m=1}^{\infty}\frac{2^{\omega\left(p^m\right)}}{{p}^{ms}}=\sum_{m=1}^{\infty}2{\left(\frac{1}{p^s}\right)}^m=\frac{2}{p^s-1}.$$
Then $\frac{a_p}{1+a_p}=\frac{2}{p^s+1}$.
Substituting into \eqref{inf1} shows that 
$$\sum_{n\in\N}\frac{\omega(n)2^{\omega(n)}}{n^s}= \left(\sum_{n\in\N}\frac{2^{\omega(n)}}{n^s}\right)\left( \sum_{p}\frac{2}{p^s+1}\right).$$
Rewriting the right-hand side in terms of zeta and prime zeta functions while noting that they will both converge if $\sigma>1$ completes the proof.
\end{proof}

\begin{thm}\label{infJordan} Let $J_k(n)$ denote Jordan's totient function. For $s,k\in\C$ such that $\sigma>\max\left(1,1+\Re(k)\right)$,
\begin{equation}
\sum_{n\in\N}\frac{\omega(n)J_k(n)}{n^s} =\frac{\zeta(s-k)}{\zeta(s)} \sum_{p}\frac{p^k-1}{p^s-1}.
\end{equation}
\end{thm}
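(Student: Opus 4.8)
The plan is to specialize Theorem~\ref{infmain} to $f = J_k$, just as the preceding results specialize \eqref{inf1} to concrete multiplicative functions. Since Jordan's totient is multiplicative, the only genuine work is to evaluate the local factor $a_p = \sum_{m=1}^{\infty} J_k(p^m)/p^{ms}$ in closed form and to identify the two factors appearing on the right of \eqref{inf1}.

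First I would compute $a_p$. From \eqref{def2}, on a prime power one has $J_k(p^m) = p^{mk}(1-p^{-k}) = p^{(m-1)k}(p^k-1)$, so
$$a_p = (p^k-1)\sum_{m=1}^{\infty}\frac{p^{(m-1)k}}{p^{ms}} = (p^k-1)\,p^{-s}\sum_{m=0}^{\infty}\left(p^{k-s}\right)^m = \frac{p^k-1}{p^s-p^k},$$
the geometric series converging precisely when $\Re(k-s)<0$. A short simplification gives $1+a_p = (p^s-1)/(p^s-p^k)$ and hence
$$\frac{a_p}{1+a_p} = \frac{p^k-1}{p^s-1},$$
which is exactly the prime summand claimed.

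Next I would supply the remaining factor. The Dirichlet series of Jordan's totient is $\sum_{n\in\N} J_k(n)/n^s = \zeta(s-k)/\zeta(s)$; this follows from $J_k = \mu * \mathrm{Id}_k$ (M\"obius inversion of $\sum_{d|n} J_k(d) = n^k$), which factors the series as $\zeta(s)^{-1}\zeta(s-k)$. Substituting this together with the expression for $a_p/(1+a_p)$ into \eqref{inf1} yields the stated identity.

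Finally I would settle convergence and the side conditions. The factor $\zeta(s-k)$ converges absolutely for $\sigma>1+\Re(k)$ and $\zeta(s)^{-1}$ for $\sigma>1$, so $\sum_{n\in\N} J_k(n)/n^s$ has abscissa of absolute convergence $\max(1,1+\Re(k))$. For the prime sum I would split $\frac{p^k-1}{p^s-1} = \frac{p^k}{p^s-1} - \frac{1}{p^s-1}$ and apply Lemma~\ref{primezetacor2} (with $a=-1$, $|a|=1<2$) to the first term, giving convergence for $\sigma>\max(1,1+\Re(k))$, and Lemma~\ref{primezetacor} to the second, which is $P(s,-1)$ and converges for $\sigma>1$. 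Both factors of \eqref{inf1} therefore converge absolutely on $\sigma>\max(1,1+\Re(k))$, matching the hypothesis; and since $|p^s|=p^\sigma>1$ there, the conditions $a_p\neq-1$ and $p^s-p^k\neq0$ hold automatically. The one point to watch is the bookkeeping of the two competing constraints — $\sigma>1+\Re(k)$ from the $\zeta(s-k)$/geometric factor and $\sigma>1$ from the M\"obius/prime-zeta factor — which is exactly why the maximum appears in the abscissa.
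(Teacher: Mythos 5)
Your proposal is correct and takes essentially the same route as the paper's proof: both specialize \eqref{inf1} with the local factor $a_p=\frac{p^k-1}{p^s-p^k}$, compute $\frac{a_p}{1+a_p}=\frac{p^k-1}{p^s-1}$, and identify the remaining factor as $\zeta(s-k)/\zeta(s)$, with convergence handled via Lemmas \ref{primezetacor} and \ref{primezetacor2}. The only difference is one of detail: you derive $a_p$ from \eqref{def2} by summing the geometric series and justify the abscissa by splitting the prime sum, steps the paper asserts rather than writes out.
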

\begin{proof}
Taking $a_p=\frac{p^k-1}{p^s-p^k}$, we have $(1+a_p)={\left(1-\frac{1}{p^s}\right)}{\left(1-\frac{1}{p^{s-k}}\right)}^{-1}$ and $\frac{a_p}{1+a_p} = \frac{p^k-1}{p^s-1}$. Substituting into \eqref{inf1} shows that
$$\sum_{n\in\N}\frac{\omega(n)J_k(n)}{n^s} =\prod_{p}{\left(1-\frac{1}{p^s}\right)}{\left(1-\frac{1}{p^{s-k}}\right)}^{-1}\left( \sum_{p}\frac{p^k-1}{p^s-1}\right).$$
The series $\sum_{p}\frac{p^k-1}{p^s-1}$ does not have a representation as a sum of prime zeta and shifted prime zeta functions in general, but in special cases such as $s = 2k$, $k>1$, it does. Rewriting in terms of zeta functions and taking convergence criteria based on \eqref{primezetacor2} completes the proof.
\end{proof}

We state several theorems without proof. They are all special cases of Theorem \ref{infmain} and can be proved similarly to Theorem \ref{infJordan}. In each case we begin with a known Euler product for a function $f(n)$, then use that to extract $a_p$.

\begin{thm} Let $\sigma_k(n)$ denote the sum of the $k$th powers of the divisors of $n$. For $s,k\in\C$ such that $\sigma>\max\left(1,1+\Re(k)\right)$,
\begin{equation}\label{inf12}
\sum_{n\in\N}\frac{\omega(n)\sigma_k(n)}{n^s} = \zeta(s)\zeta(s-k) \left(P(s)+P(s-k)-P(2s-k)\right).
\end{equation}
\end{thm}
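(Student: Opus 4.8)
The plan is to apply Theorem \ref{infmain} with $f(n) = \sigma_k(n)$, which is multiplicative, and to extract the coefficient $a_p$ directly from the known Euler product rather than summing $\sum_{m\geq 1}\sigma_k(p^m)/p^{ms}$ by hand. First I would recall the standard Euler-product identity $\sum_{n\in\N}\frac{\sigma_k(n)}{n^s} = \zeta(s)\zeta(s-k)$, whose factored form is $\prod_p \left[(1-p^{-s})(1-p^{k-s})\right]^{-1}$; this already pins down the abscissa of absolute convergence of the Dirichlet series at $\sigma > \max(1, 1+\Re(k))$, since that is the larger of the abscissae for $\zeta(s)$ and $\zeta(s-k)$.

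Because Theorem \ref{infmain} identifies $\sum_{n\in\N}\frac{f(n)}{n^s}$ with the Euler product $\prod_p(1+a_p)$, comparing Euler factors gives $1 + a_p = \left[(1-p^{-s})(1-p^{k-s})\right]^{-1}$, so I never need an explicit closed form for $a_p$. The key simplification is then $\frac{a_p}{1+a_p} = 1 - \frac{1}{1+a_p} = 1 - (1-p^{-s})(1-p^{k-s})$, which expands to $p^{-s} + p^{k-s} - p^{k-2s}$. Summing over all primes and recognizing the three pieces as $P(s)$, $P(s-k)$, and $P(2s-k)$ respectively yields $\sum_p \frac{a_p}{1+a_p} = P(s) + P(s-k) - P(2s-k)$, and substituting this together with $\sum_{n\in\N}\frac{\sigma_k(n)}{n^s}=\zeta(s)\zeta(s-k)$ into \eqref{inf1} produces the claimed right-hand side.

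The step requiring genuine care, as opposed to routine algebra, is verifying that the two factors in \eqref{inf1} are simultaneously absolutely convergent on $\sigma > \max(1,1+\Re(k))$, which is exactly the hypothesis Theorem \ref{infmain} demands. The Dirichlet factor is handled by the first step. For the prime sum I would check the three shifted prime zeta values separately: $P(s)$ converges absolutely for $\sigma > 1$, $P(s-k)$ for $\sigma > 1 + \Re(k)$, and $P(2s-k)$ for $\sigma > (1+\Re(k))/2$, the last of which is never the binding condition. Hence the effective constraints come from $P(s)$ and $P(s-k)$, whose common region of absolute convergence is precisely $\sigma > \max(1, 1+\Re(k))$, matching the Dirichlet factor. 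With both factors absolutely convergent there, Theorem \ref{infmain} applies verbatim and the proof closes; I expect no obstacle beyond keeping this abscissae bookkeeping consistent.
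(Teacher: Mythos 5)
Your proposal is correct and is exactly the route the paper intends: the paper states this theorem without proof as a special case of Theorem \ref{infmain}, to be proved ``similarly to Theorem \ref{infJordan}'' by starting from the known Euler product $\zeta(s)\zeta(s-k)=\prod_p\left[(1-p^{-s})(1-p^{k-s})\right]^{-1}$ and extracting $\frac{a_p}{1+a_p}=1-(1-p^{-s})(1-p^{k-s})=p^{-s}+p^{k-s}-p^{k-2s}$, which is precisely your computation. Your convergence bookkeeping (noting $P(2s-k)$ is never the binding constraint, so the region is $\sigma>\max(1,1+\Re(k))$) is a correct filling-in of details the paper leaves implicit.
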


\begin{thm} Let $d(n)$ equal the number of divisors of n. For $s\in\C$ such that $\sigma>1$,   
\begin{equation}
\sum_{n\in\N}\frac{\omega(n)d(n^2)}{n^s} = \frac{\zeta^3(s)}{\zeta(2s)} \left(4P(s,1)-P(s)\right).
\end{equation}
\end{thm}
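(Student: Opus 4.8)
The plan is to specialize Theorem~\ref{infmain} to $f(n)=d(n^2)$, which is multiplicative because it is the composition of the multiplicative $d$ with the multiplicative map $n\mapsto n^2$. The strategy mirrors the proof of Theorem~\ref{infJordan}: begin with the known Euler product for $\sum_{n\in\N}\frac{d(n^2)}{n^s}$, extract $a_p$ from it, simplify $\frac{a_p}{1+a_p}$, and recognize the resulting prime sum as a combination of $P(s,1)$ and $P(s)$.

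First I would record the Dirichlet series $\sum_{n\in\N}\frac{d(n^2)}{n^s}=\frac{\zeta^3(s)}{\zeta(2s)}$, which furnishes the factor $\prod_{p}(1+a_p)$ appearing in \eqref{inf1}. To extract the coefficient, note that $f(p^m)=d(p^{2m})=2m+1$, so that for $\sigma>1$ (where $|p^{-s}|<1$) the series $a_p=\sum_{m=1}^{\infty}\frac{2m+1}{p^{ms}}$ is summable as a combination of a geometric series and its derivative. Using $\sum_{m=0}^{\infty}(2m+1)x^m=\frac{1+x}{(1-x)^2}$ with $x=p^{-s}$ gives $1+a_p=\frac{1+p^{-s}}{(1-p^{-s})^2}$, and hence $a_p=\frac{p^{-s}(3-p^{-s})}{(1-p^{-s})^2}$.

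The key step is then the simplification of $\frac{a_p}{1+a_p}$. Dividing the two expressions above, the factors of $(1-p^{-s})^2$ cancel and, after clearing denominators by $p^{2s}$, I expect to obtain $\frac{a_p}{1+a_p}=\frac{3p^s-1}{p^s(p^s+1)}$. The crux of the argument is the partial-fraction recognition $\frac{3p^s-1}{p^s(p^s+1)}=\frac{4}{p^s+1}-\frac{1}{p^s}$, which is verified at once by recombining over the common denominator $p^s(p^s+1)$. Summing over all primes then identifies $\sum_{p}\frac{a_p}{1+a_p}=4P(s,1)-P(s)$, and substituting this together with $\frac{\zeta^3(s)}{\zeta(2s)}$ into \eqref{inf1} yields the claimed identity.

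Finally I would address convergence. The factor $\frac{\zeta^3(s)}{\zeta(2s)}$ converges absolutely for $\sigma>1$, and by Lemma~\ref{primezetacor} both $P(s,1)$ (since $|1|<2$) and $P(s)=P(s,0)$ converge absolutely for $\sigma>1$; this verifies the hypotheses of Theorem~\ref{infmain} with $\sigma_a=1$ and guarantees absolute convergence of the left-hand side for $\sigma>1$. The main obstacle is purely computational: carrying out the geometric-series summation for $a_p$ correctly and spotting the partial-fraction split that produces the coefficients $4$ and $-1$. Once that split is in hand the remaining manipulations are bookkeeping already licensed by Theorem~\ref{infmain}.
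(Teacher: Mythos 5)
Your proposal is correct and follows exactly the route the paper prescribes: this theorem is stated there without proof as a special case of Theorem~\ref{infmain}, to be handled ``similarly to Theorem~\ref{infJordan}'' by starting from the known Euler product $\sum_{n\in\N}d(n^2)n^{-s}=\zeta^3(s)/\zeta(2s)$, extracting $a_p$, and simplifying $a_p/(1+a_p)$. Your computation $f(p^m)=2m+1$, $1+a_p=\frac{1+p^{-s}}{(1-p^{-s})^2}$, and the partial-fraction split $\frac{3p^s-1}{p^s(p^s+1)}=\frac{4}{p^s+1}-\frac{1}{p^s}$ is exactly the intended calculation, and your convergence check via Lemma~\ref{primezetacor} is sound.
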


\begin{thm} Let $d(n)$ equal the number of divisors of n.  For $s\in\C$ such that $\sigma>1$,    
\begin{equation}
\sum_{n\in\N}\frac{\omega(n)d^2(n)}{n^s} = \frac{\zeta^4(s)}{\zeta(2s)} \left(8P(s,1)+P(2s)-4P(s)\right).
\end{equation}
\end{thm}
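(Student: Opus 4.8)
The plan is to apply Theorem~\ref{infmain} with $f(n)=d^2(n)$, following the recipe used for Theorem~\ref{infJordan}: first pin down the Dirichlet series $\sum_{n\in\N} d^2(n)/n^s$, then read off the local coefficient $a_p$ from its Euler factor, and finally massage $a_p/(1+a_p)$ into a combination of prime zeta functions. Since $d^2(n)$ is multiplicative but not completely multiplicative, we cannot use \eqref{inf3}; we must extract $a_p$ directly, exactly as in the $J_k(n)$ case.

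First I would record the Euler product. Because $d(p^m)=m+1$, the local factor is $\sum_{m=0}^{\infty}(m+1)^2 p^{-ms}$, and the closed form $\sum_{m=0}^{\infty}(m+1)^2 x^m = (1+x)(1-x)^{-3}$ (obtained by differentiating the geometric series twice) gives a local factor of $(1+p^{-s})(1-p^{-s})^{-3}$. Rewriting $1+p^{-s}=(1-p^{-2s})(1-p^{-s})^{-1}$ collapses the product over $p$ to $\zeta^4(s)/\zeta(2s)$, which is the known Dirichlet series for $d^2(n)$ and converges for $\sigma>1$. This supplies the prefactor in the statement.

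Next I would extract $a_p$. Since $a_p$ differs from the full local factor only by the $m=0$ term $f(1)=1$, we have $1+a_p=(1+p^{-s})(1-p^{-s})^{-3}$, so writing $x=p^{-s}$ gives $a_p = x(4-3x+x^2)(1-x)^{-3}$. Dividing, the factor $(1-x)^{-3}$ cancels and I obtain the clean rational function $a_p/(1+a_p)=x(x^2-3x+4)/(1+x)$.

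The remaining step, and the one I expect to be the main obstacle even though it is only algebra, is recognizing the partial-fraction-type identity $x(x^2-3x+4)/(1+x)=8x/(1+x)+x^2-4x$; identifying the correct coefficients $8,1,-4$ is exactly the nontrivial guess that makes the prime-zeta decomposition work. Once this is verified, substituting $x=p^{-s}$ and summing over all primes turns the three pieces into $8P(s,1)+P(2s)-4P(s)$, using $1/(p^s+1)=x/(1+x)$, $p^{-2s}=x^2$, and $p^{-s}=x$. Substituting this sum together with the prefactor $\zeta^4(s)/\zeta(2s)$ into \eqref{inf1} yields the claimed identity. For convergence I would invoke Lemma~\ref{primezetacor} with $a=1$ (so $|a|=1<2$) to get absolute convergence of $P(s,1)$ for $\sigma>1$, note that $P(s)$ and $P(2s)$ converge there as well, and combine these with the convergence of $\zeta^4(s)/\zeta(2s)$ for $\sigma>1$; since Theorem~\ref{infmain} requires both factors to converge absolutely on the common half-plane, everything holds for $\sigma>1$.
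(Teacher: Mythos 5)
Your proposal is correct, and it is exactly the route the paper prescribes: the paper states this theorem without a detailed proof, noting only that it follows from Theorem~\ref{infmain} in the same way as Theorem~\ref{infJordan} by starting from a known Euler product and extracting $a_p$. Your computation — local factor $(1+p^{-s})(1-p^{-s})^{-3}$ giving $\zeta^4(s)/\zeta(2s)$, then $a_p/(1+a_p)=x(x^2-3x+4)/(1+x)=8x/(1+x)+x^2-4x$ with $x=p^{-s}$, summing to $8P(s,1)+P(2s)-4P(s)$ — checks out and supplies precisely the details the paper omits.
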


\section{Extensions}\label{extensions}
Lastly, we show how to generalize the methods of this paper to second and higher order derivatives. Starting with the genrating product for $e_k$, that $\prod_{k=1}^n (1+tx_i) = \sum_{k = 0}^{n}e_k t^k$, we study the action of the differential operator $D:=x\frac{d}{dx}$. Applying it once yields
$$D\left(\prod_{k=1}^n (1+tx_i)\right)=\sum_{k = 1}^{n}k e_k t^k,$$
from which we recover the familiar \eqref{main} after evaluating at $t=1$. Applying $D$ a second time results in 
\begin{align}
\sum_{k = 1}^{n}k^2 e_k t^k &=D\left(x\prod_{k=1}^n (1+tx_i)   \sum_{k=1}^n \frac{x_i}{1+t x_i}\right) \\&=\left(\prod_{i = 1}^{n}(1+x_i)\right)\left( \left(\sum_{i = 1}^{n}\frac{x_i}{1+x_i}\right)^2+\sum_{i=1}^{n}\frac{x_i}{\left(1+x_i\right)^2}\right).
\end{align}
Letting $x_i =a_p:= \frac{f(p_i)}{{p_i}^s}+\frac{f({p_i}^2)}{{p_i}^{2s}}+\cdots = \sum_{m=1}^{\infty}\frac{f({p_i}^m)}{{p_i}^{ms}}$, and taking $t \rightarrow 1$ and $n\rightarrow \infty$ gives
\begin{equation}
\sum_{n\in\N}\frac{{\omega(n)}^2f(n)}{n^s}=\left(\sum_{n\in\N} \frac{f(n)}{n^s}\right)\left( \left(\sum_{p}\frac{a_p}{1+a_p}\right)^2+\sum_{p}\frac{a_p}{\left(1+a_p\right)^2}\right).
\end{equation}
For a completely multiplicative function, summing $a_p$ as a geometric series reduces this to
\begin{equation}
\sum_{n\in\N}\frac{{\omega(n)}^2f(n)}{n^s}=\left(\sum_{n\in\N} \frac{f(n)}{n^s}\right)\left( \left(\sum_{p}\frac{f(p)}{p^s}\right)^2+\sum_{p}\frac{f(p)}{p^s}-\sum_{p}\left(\frac{f(p)}{p^s}\right)^2\right).
\end{equation}
As an example, letting $f(n)=1$ gives
\begin{equation}
\sum_{n\in\N}\frac{{\omega(n)}^2}{n^s}=\zeta(s)\left( P^2(s)+P(s)-P(2s)\right),
\end{equation}
which converges for $\Re(s)>1$. In general, applying $D$ to $E(t)$ $k$ times will result in a Dirichlet series of the form $\sum_{n\in\N}\frac{\omega(n)^kf(n)}{n^s}$.

%ADD FINAL DIVISOR POWERS COROLLARIES - CAN I DO GENERAL SIGMA DECOMPOSTITIONS

%CAN I USE RAMANUJAN SUM, WITH THE SIGMA CORRECTION

These theorems apply to any multiplicative functions. Together, this allows for a large class of infinite and divisor sums weighted by $\omega(n)$ to be addressed for the first time. This leads to some surprising results such as that the Dirichlet series for products of $\omega(n)$ and other multiplicative functions often have a convenient closed form expression in terms of zeta and prime zeta functions. The methods of this paper also suggest an obvious generalization; taking the $k$th derivative of $E(t)$ will lead to finite and infinite sums involving $\omega(n)^k$. This also suggests deep connections between the theory of symmetric functions and Dirichlet series, since with the right choice of $x_i$ we can interpret a Dirichlet series as a sum over symmetric polynomials. Different identities for symmetric polynomials with correspond to general expressions for Dirichlet series weighted by different functions. Due to this, a systematic study of identities for symmetric functions should correspond to identities for Dirichlet series.

\subsection*{Acknowledgements}
%%%%%%%%%%%%%%%%%%%%%%%%%%%%%%%%%%%%%%%%%%%%%%%%%%%%%%%%%%%%%%%%%%%%%%%%%%%%%%%%%%%%
Many thanks to my research supervisors at the National Institute of Standards and Technology, Gaithersburg, Maryland; Guru Khalsa, Mark Stiles, Kyoung-Whan Kim, and Vivek Amin have at various times helped me through random issues. Many thanks as well to Howard Cohl for his invaluable style tips and general mathematical expertise. Without him this paper wouldn't have gotten out of the ground. Bruce Berndt of the University of Illinois at Urbana-Champaign and Krishna Alladi of the University of Central Florida also provided valuable feedback. Many thanks also go out to my teachers - William Wuu, Joseph Boettcher, Colleen Adams, Joshua Schuman, Jamie Andrews, and others - at Quince Orchard High School, who have miraculously put up with me through the years.

\end{document}